\newtheorem{theorem}{Theorem}[section]
\newtheorem{lemma}[theorem]{Lemma}
\newtheorem{proposition}[theorem]{Proposition}
\newtheorem{corollary}[theorem]{Corollary}
\theoremstyle{definition}
\newtheorem{definition}[theorem]{Definition}
\newtheorem{example}[theorem]{Example}
\newtheorem{remark}[theorem]{Remark}
\newcommand{\LH}{L(H)}
\newcommand{\N}{\mathbb{N}}
\newcommand{\C}{\mathbb{C}}
\newcommand{\K}{\mathcal{K}}
\newcommand{\fset}{\mathcal{F}}		
\newcommand{\Cuntz}[1]{\mathcal{O}_{#1}}
\newcommand{\id}[1]{{\rm id}_{#1}}
\newcommand{\ltwo}[1]{H_{#1}}
\newcommand{\bdd}[2]{\mathcal{L}_{#1}(#2)}
\newcommand{\fsim}[2]{\,{\sim}_{#1,#2}\,}
\newcommand{\fleq}[2]{\,{\prec}_{#1,#2}\,}
\begin{document}

\title[homotopy symmetric $C^*$-algebras]{Deformations of nilpotent groups and  homotopy symmetric $C^*$-algebras}
\author{Marius Dadarlat}\address{MD: Department of Mathematics, Purdue University, West Lafayette, IN 47907, USA}\email{mdd@purdue.edu}
\author{Ulrich Pennig}\address{UP: Mathematisches Institut, Westf\"alische Wilhelms-Universit\"at M\"unster, Ein\-stein\-stra\ss e 62, 48149 M\"unster, Germany}\email{u.pennig@uni-muenster.de}	
\begin{abstract}
The homotopy symmetric $C^*$-algebras are those separable $C^*$-algebras for which one can unsuspend in E-theory.
We find a new simple condition that characterizes  homotopy symmetric nuclear $C^*$-algebras and  use it to show  that the property of being homotopy symmetric passes to nuclear $C^*$-subalgebras and it has a number of other significant permanence properties.

As an application, we show that if  $I(G)$ is the kernel
 of the trivial representation $\iota:C^*(G)\to \mathbb{C}$ for a countable discrete  torsion free nilpotent group $G$, then $I(G)$ is homotopy symmetric and hence the Kasparov group $KK(I(G),B)$ can be realized as the homotopy classes of asymptotic morphisms $[[I(G),B \otimes \K]]$
 for any separable $C^*$-algebra~$B$.
\end{abstract}
\subjclass[2010]{19K99, 46L80}
\thanks{M.D. was partially supported by NSF grant \#DMS--1362824}
\thanks{U.P. was partially supported by the SFB 878 -- ``Groups, Geometry \& Actions''}

\maketitle

\section{Introduction}

The intuitive idea of deformations of $C^*$-algebras was formalized by Connes and Higson
who introduced the concept of asymptotic morphism \cite{Con-Hig:etheory}.
These morphisms are at the heart of $E$-theory,  the universal bifunctor from
 the category of separable $C^*$-algebras to the category
 abelian groups which is homotopy invariant,  $C^*$-stable and half-exact.
Asymptotic morphisms have become important tools in other areas,
such as deformation quantization  \cite{MR1184061},  index theory \cite{Con:noncomm}, \cite{Trout-index},
 the Baum-Connes conjecture \cite{HigKas:BC},  shape theory  \cite{Dad:shape}, and classification theory of nuclear
 C*-algebras \cite{Ror:encyclopedia}.

An asymptotic morphism $(\varphi_t)_{t \in [0,\infty)}$ is given by a family of maps $\varphi_t \colon A \to B$ parametrized by $t \in [0,\infty)$ such that $t \mapsto \varphi_t$ is pointwise continuous and the axioms for  $*$-homomorphisms are satisfied \emph{asymptotically} for $t \to \infty$.
 There is a natural notion of homotopy based on asymptotic morphisms of the form $A \to B \otimes C[0,1]$.  Homotopy classes of asymptotic morphisms from the suspension $SA$ of $A$ to the stabilization of $SB$ provide a model for $E(A,B)$, i.e.\ $E(A,B) = [[SA, SB \otimes \K]]$, \cite{Con-Hig:etheory}.
A similar construction using completely positive contractive asymptotic morphisms yields a realization of $KK$-Theory, namely $KK(A,B)\cong [[SA, SB \otimes \K]]^{\rm cp},$ as proven by Houghton-Larsen and Thomsen \cite{Thomsen-Larsen}. $E$-theory factors through $KK$-theory
and the fact that the map $KK(A,B)\to E(A,B)$ is an isomorphism for nuclear $C^*$-algebras $A$ can be easily seen from the Choi-Effros theorem, which implies $[[A,B]] \cong [[A,B]]^{\rm cp}$.

Note that the introduction of the suspensions and the stabilization of $B$ are necessary to obtain a natural abelian group structure on $E(A,B)$. Equally important, but perhaps less apparent, is the fact that $SA$ becomes quasidiagonal,  as shown by Voiculescu \cite{Voi:qd}, and this property implicitly assures a large supply of almost multiplicative maps $SA \to \K$.
However, a deformation $A \to B \otimes \K$, without the suspensions, contains in principle more geometric information. Asymptotic morphisms of this form are a crucial tool in the classification theory of nuclear $C^*$-algebras. We are confronted with the dilemma of understanding $[[A,B \otimes \K]]$, while only $E(A,B) = [[SA, SB \otimes \K]]$ is computable using the tools of algebraic topology. The best case scenario in this situation is of course that the monoid homomorphism $[[A,B \otimes \K]] \to E(A,B)$ induced by the suspension map  is an isomorphism.

Under favorable circumstances, this is in fact true: It is shown in \cite{DadLor:unsusp} and \cite{Dad:unsusp-annalen} that for a connected compact metrizable space $X$ with basepoint $x_0 \in X$, we have $[[C_0(X \setminus x_0), B \otimes \K]] \cong E(C_0(X \setminus x_0), B)$. In particular, $K_0(X \setminus x_0) \cong [[C_0(X \setminus x_0), \K]]$. On the right hand side we can replace the compact operators $\K$ by $\bigcup_{n=1}^{\infty} M_n(\C)$. Thus, the reduced $K$-homology of $X$ classifies deformations of $C_0(X \setminus x_0)$ into matrices. Similar deformations of commutative $C^*$-algebras into matrix algebras appeared in condensed matter physics, where Kitaev \cite{Kitaev} proposed a classification of topological insulators via (real) $K$-homology.
We refer the reader to the recent work of Loring \cite{Loring:top-ins1} for further developments and additional references.

A full answer to the  question of unsuspending in E-theory was found in \cite{DadLor:unsusp}: The natural map $[[A, B \otimes \K]] \to E(A,B)$ is an isomorphism for all separable $C^*$-algebras $B$ if and only if $A$ is \emph{homotopy symmetric}, which means that $[[\id{A}]] \in [[A, A \otimes \K]]$ has an additive inverse or equivalently that $[[A, A \otimes \K]]$ is a group.

As nice as this condition is, it can be quite hard to check in practice. One of the two main results in this paper is Theorem~\ref{thm:QH_implies homotopy symmetric}, which shows that being homotopy symmetric is equivalent for separable nuclear $C^*$-algebras to a property which is significantly easier to verify. The proof of this theorem relies crucially on results of Thomsen \cite{Thomsen:discrete}.
The new property, which we call property \textrm{(QH)}, see Definition~\ref{def:strong_AHZ}~(i), makes sense for all separable $C^*$-algebras and has the important feature that it passes to $C^*$-subalgebras.  This allows us to exhibit new vast classes of homotopy symmetric $C^*$-algebras, see Theorem ~\ref{thm:permanence} and Corollary~\ref{cor:fields}.

Our second main result is Theorem~\ref{thm:AH2} which states that the augmentation ideal $I(G)=\ker(\iota:C^*(G)\to \C)$ for a torsion free countable discrete nilpotent group $G$ satisfies property \textrm{(QH)} and hence it is homotopy symmetric. In particular $[[I(G),B \otimes \K]]\cong KK(I(G),B)$ for any separable $C^*$-algebra $B$.
This confirms a conjecture of the first author \cite{Dadarlat-almost} for the class of nilpotent groups.

\section{Discrete asymptotic morphisms and Property \textrm{(QH)}}

\begin{definition}\label{def:ah-basic}
Let $A$, $(B_n)_n$ be separable $C^*$-algebras. A completely positive contractive (cpc)
\emph{discrete asymptotic morphism} from $A$ to $(B_n)_n$ is a sequence of
completely positive linear contractions $\{\varphi_n: A \to B_n\}_n$ such
that
$$
	\lim_{n\to \infty} \| \varphi_n(ab)-\varphi_n(a)\varphi_n(b)\|=0
$$
for all $a,b \in A$.
If, furthemore, each $\varphi_n$ is unital we say $(\varphi_n)$
is a \emph{ucp discrete asymptotic morphism}.
Two cpc discrete asymptotic morphisms $\{\varphi_n,\psi_n: A \to B_n\}_n$
are \emph{equivalent}, written $\varphi_n\approx\psi_n$, if $\lim_{n\to \infty} \| \varphi_n(a)-\psi_n(a)\|=0$ for all $a\in A$. They are \emph{unitarily equivalent},
written $(\varphi_n)\sim (\psi_n)$, if there is a sequence of unitaries
$u_n \in B_n$ such that $(\varphi_n)\approx (u_n\psi_n u_n^*)$. They are
\emph{homotopy equivalent}, written $(\varphi_n)\sim_h (\psi_n)$, if there is a
cpc discrete asymptotic morphism $\{\Phi_n \colon A \to C[0,1]\otimes B_n\}_n$
such that  $\Phi_n^{(0)}=\varphi_n$ and $\Phi_n^{(1)}=\psi_n$ for all
$n\geq 1$. Equivalent discrete asymptotic morphisms are homotopic
via the homotopy $\Phi_n^{(t)}=(1-t)\varphi_n+t\psi_n$. Homotopy of ucp asymptotic morphisms is defined similarly, by
requiring that $\Phi_n(1)=1$. The homotopy classes of cpc discrete asymptotic
morphisms $\varphi_n \colon A \to B$ from $A$ to a fixed $B = B_n$ will be
denoted by $[[A, B]]^{\rm cp}_{\N}$.  (Non-discrete) \emph{cpc asymptotic morphisms}
are defined completely analogously by replacing the indexing set $\N$ by
$[0, \infty)$ and considering cpc maps
$\varphi \colon A \to C_b([0, \infty), B)$.
The homotopy classes of
cpc asymptotic morphisms are denoted by $[[A,B]]^{\rm cp}$.
Similarly we denote by $[[A,B]]$ (respectively $[[A,B]]_{\N}$ in the discrete case) the homotopy classes of general asymptotic
morphisms, \cite{Con-Hig:etheory}, \cite{Thomsen:discrete}.
\end{definition}

\begin{remark} \label{rem:compacts_matrices}
If $A$ and $B$ are separable $C^*$-algebras,
then every cpc discrete asymptotic morphism
 $\{\varphi_n \colon A \to B \otimes \K\}_n$ is equivalent to some
 $\{\psi_n \colon A \to M_{m(n)}(B)\}_n$ obtained by compressing
$\varphi_n$ by a suitable sequence of projections $1_{M(B)} \otimes p_n \in M(B) \otimes \K$,
where $p_n \in \K$ converges strongly  to $1$.
\end{remark}

\begin{remark} \label{rem:category}
There is a category $\mathit{Asym}$ with objects separable $C^*$-algebras and with
homotopy classes of (non-discrete) asymptotic morphisms as morphisms. In particular,
there is a composition of asymptotic morphisms, which is well-defined up to homotopy
\cite{Con-Hig:etheory}.  There is a similar category $\mathit{Asym}^{\rm cp}$ based on cpc asymptotic morphisms.
There are no analogues of these categories for homotopy
classes of discrete asymptotic morphisms. Nevertheless, it was shown in \cite[Thm.\ 7.2]{Thomsen:discrete} that there is a well-defined pairing of the form
$[[A, B]]_{\N} \times [[B,C]] \to [[A,C]]_{\N}$,
$(x,y)\mapsto y\circ x$ such that $z\circ ( y\circ x)=(z\circ y)\circ x$
for $z\in [[C,D]]$.
The arguments of \cite[Thm.\ 7.2]{Thomsen:discrete} show that there is also a pairing
$[[A, B]]^{\rm cp}_{\N} \times [[B,C]]^{\rm cp} \to [[A,C]]^{\rm cp}_{\N}$ with similar properties.
\end{remark}

Any discrete asymptotic morphism $\{\varphi_n: A \to B_n\}_n$ induces a $*$-ho\-mo\-mor\-phism
\(
\bm{\Phi}:A \to \prod_n B_n/\bigoplus_n B_n\ .
\)

\begin{definition}\label{def:injective}
A discrete asymptotic morphism $\{\varphi_n: A \to B_n\}_n$ is called
\emph{injective} if the induced $*$-homomorphism
$\bm{\Phi}$ is injective, or equivalently if
\newline
$\limsup_n \|\varphi_n(a)\|=\|a\|$ for all $a\in A$.
\end{definition}

The cone over a $C^*$-algebra $B$ is defined as $CB=C_0(0,1]\otimes B$.
\begin{proposition}\label{prop:QH-equivalence}
For a separable $C^*$-algebra $A$ the following properties are equivalent.
\begin{itemize}
  \item[(i)] There is a null-homotopic
\emph{injective} cpc discrete asymptotic morphism $\{\eta_n \colon A \to \K \}_n$.
  \item[(ii)] There is a null-homotopic
\emph{injective} cpc discrete asymptotic morphism $\{\gamma_n \colon A \to \LH \}_n$.
  \item[(iii)] There is an injective $*$-homomorphism $\bm{\eta}: A\to\prod_n C\K/\bigoplus_n C\K$ which is liftable to a cpc map $\eta:A\to\prod_n C\K$.
   \item[(iv)] There is an injective $*$-homomorphism $\bm{\gamma}:A\to\prod_n C\LH/\bigoplus_n C\LH$ which is liftable to a cpc map $\gamma: A\to\prod_n C\LH$.
\end{itemize}
\end{proposition}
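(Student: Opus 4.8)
The plan is to prove the equivalences via the cycle (i)$\Rightarrow$(ii)$\Rightarrow$(iv)$\Rightarrow$(i) together with (i)$\Rightarrow$(iii)$\Rightarrow$(iv), so that the only substantial step is (iv)$\Rightarrow$(i). First I would set up a dictionary for (iii) and (iv). Writing $CB=C_0(0,1]\otimes B$, a cpc map $A\to\prod_n CB$ is the same as a sequence of cpc maps $\{\varphi_n\colon A\to CB\}$; its composite with $\prod_n CB\to\prod_n CB/\bigoplus_n CB$ is a $*$-homomorphism precisely when $\{\varphi_n\}$ is a cpc discrete asymptotic morphism, and that homomorphism is injective, i.e.\ isometric, precisely when $\limsup_n\|\varphi_n(a)\|=\|a\|$ for all $a$, i.e.\ when $\{\varphi_n\}$ is injective in the sense of Definition~\ref{def:injective}. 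Hence (iii) [resp.\ (iv)] is equivalent to the existence of an injective cpc discrete asymptotic morphism $A\to C\K$ [resp.\ $A\to C\LH$]. Now a null-homotopy of a cpc discrete asymptotic morphism $\{\eta_n\colon A\to B\}$ is a cpc discrete asymptotic morphism $\{\Phi_n\colon A\to C[0,1]\otimes B\}$ with $\Phi_n^{(0)}=0$, hence takes values in $CB$, and it is injective whenever $\{\eta_n\}$ is, since $\|\Phi_n(a)\|_{CB}=\sup_t\|\Phi_n^{(t)}(a)\|\ge\|\Phi_n^{(1)}(a)\|=\|\eta_n(a)\|$. This gives (i)$\Rightarrow$(iii), and the same computation with $\LH$ in place of $\K$ gives (ii)$\Rightarrow$(iv); the implications (i)$\Rightarrow$(ii) and (iii)$\Rightarrow$(iv) are immediate from the isometric inclusions $\K\subseteq\LH$ and $C\K\subseteq C\LH$.

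For the main step (iv)$\Rightarrow$(i), I would start from an injective cpc discrete asymptotic morphism $\{\gamma_n\colon A\to C\LH\}$, viewing each $\gamma_n(a)$ as a continuous function $[0,1]\to\LH$ vanishing at $0$. Fix a dense sequence $(a_i)$ in $A$ and let $B\subseteq\LH$ be the separable $C^*$-subalgebra generated by all values $\gamma_n^{(t)}(a_i)$ ($n,i\ge1$, $t\in[0,1]$) --- here separability of $A$ is used. Fix also a dense sequence $(s_j)$ in $(0,1]$ and let $\rho=\bigoplus_j\mathrm{ev}_{s_j}$ act on $\mathcal H=\bigoplus_j H$; this is a faithful representation of $C\K$ (exhibiting it as quasidiagonal) whose extension to $C\LH$ is the injective $*$-homomorphism $\bar\rho(c)=\bigoplus_j c(s_j)$. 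Applying Arveson's quasicentral approximate unit theorem to the separable algebra $C^*(B,\K)$ with its ideal $\K$, choose an approximate unit $(r^{(m)})_m$ of $\K$ in $C^*(B,\K)$ consisting of finite-rank positive contractions with $\|[r^{(m)},x]\|\to0$ for $x\in C^*(B,\K)$, let $P_N$ be the projection onto the first $N$ summands of $\mathcal H$, and set $q_\ell=P_{N_\ell}(1\otimes r^{(m_\ell)})$ with $N_\ell,m_\ell\uparrow\infty$. Then each $q_\ell$ has finite rank, $q_\ell\uparrow 1$, $P_N$ commutes with $\bar\rho(C\LH)$, so $\|[q_\ell,\bar\rho(c)]\|\le\|[1\otimes r^{(m_\ell)},\bar\rho(c)]\|=\sup_t\|[r^{(m_\ell)},c(t)]\|$, which tends to $0$ uniformly over the compact set $\{c(t):t\in[0,1]\}$ whenever it lies in $C^*(B,\K)$; also $\|q_\ell\bar\rho(c)q_\ell\|\to\|c\|$ for each fixed $c\in C\LH$.

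Now choose $\ell(n)\uparrow\infty$ so that for all $i,j\le n$ one has both $\sup_t\|[r^{(m_{\ell(n)})},\gamma_n^{(t)}(a_j)]\|<1/n$ and $\|q_{\ell(n)}\bar\rho(\gamma_n(a_i))q_{\ell(n)}\|>\|\gamma_n(a_i)\|-1/n$, and put $\eta_n=q_{\ell(n)}\,\bar\rho(\gamma_n(\,\cdot\,))\,q_{\ell(n)}$. The first batch of estimates bounds the multiplicative defect of $\eta_n$ by $\|\gamma_n(ab)-\gamma_n(a)\gamma_n(b)\|+\sup_t\|[r^{(m_{\ell(n)})},\gamma_n^{(t)}(b)]\|\to0$; the second batch gives $\limsup_n\|\eta_n(a)\|=\|a\|$ on the dense set; and since each $q_{\ell(n)}$ has finite rank, $\{\eta_n\}$ is an injective cpc discrete asymptotic morphism into $\K$. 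It is null-homotopic: replacing $\gamma_n$ by $\Theta^{(s)}\circ\gamma_n$, where $\Theta^{(s)}$ is the cone contraction $\Theta^{(s)}(c)(t)=c(st)$ (so $\Theta^{(1)}$ is the identity and $\Theta^{(0)}=0$), the maps $\eta_n^{(s)}=q_{\ell(n)}\,\bar\rho(\Theta^{(s)}(\gamma_n(\,\cdot\,)))\,q_{\ell(n)}$ form a cpc discrete asymptotic morphism $A\to C[0,1]\otimes\K$ with $\eta_n^{(0)}=0$ and $\eta_n^{(1)}=\eta_n$, the commutator bounds being unchanged because $\Theta^{(s)}(\gamma_n(b))(t)=\gamma_n^{(st)}(b)$.

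I expect the crux of the whole proof to be exactly this passage from $\LH$-valued to $\K$-valued data. A naive compression of $\gamma_n$ by an approximate unit of the ideal $C\K\subseteq C\LH$ does not work, since cross terms such as $\gamma_n(ab)(1-f_m)$ need not be norm-small when $\gamma_n(ab)\notin C\K$; what makes the argument above go through is that, after passing to the quasidiagonal representation $\rho$ of $C\K$ and compressing by $P_{N_\ell}(1\otimes r^{(m_\ell)})$, the ``$\ell^2$-direction'' is automatically central for $\bar\rho(C\LH)$, whereas in the ``$H$-direction'' Arveson's theorem supplies an approximate unit of $\K$ that is quasicentral not only for $\K$ but for the separable algebra $B$ holding all the relevant values of the $\gamma_n$. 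Making these two mechanisms cooperate while simultaneously preserving injectivity and the null-homotopy is the technical heart, and it is also where separability of $A$ is essential.
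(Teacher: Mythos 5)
Your dictionary for (iii)/(iv) and your handling of the easy implications are fine and agree with what the paper treats as routine; the problem is the main step (iv)$\Rightarrow$(i). The compression $\eta_n(a)=q_{\ell(n)}\,\bar\rho(\gamma_n(a))\,q_{\ell(n)}$ is \emph{not} asymptotically multiplicative. Since $\bar\rho$, $P_N$ and $1\otimes r^{(m)}$ are all block diagonal for $\mathcal{H}=\bigoplus_j H$, the defect $\eta_n(ab)-\eta_n(a)\eta_n(b)$ equals, up to the multiplicative defect of $\gamma_n$, the block-diagonal operator with blocks $r\,T_j(1-r^2)S_j\,r$, where $r=r^{(m_{\ell(n)})}$, $T_j=\gamma_n^{(s_j)}(a)$, $S_j=\gamma_n^{(s_j)}(b)$. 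Quasicentrality only lets you move $r$ past $T_j$ or $S_j$; it does not make $1-r^2$ annihilate anything, and one gets $r\,T_j(1-r^2)S_j\,r\approx T_jS_j(r^2-r^4)$, of norm about $\tfrac14\lVert T_jS_j\rVert$ for a generic positive-contraction approximate unit, not tending to $0$. (Even if $r$ commuted \emph{exactly} with all $T_j,S_j$, say $r=\lambda p$ with $p$ a commuting projection, the defect is $\lambda^2(1-\lambda^2)\,pT_jS_jp$; the familiar cancellation $pT(1-p)Sp=pT(1-p)[S,p]$ needs $p$ to be a projection.) A quasicentral approximate unit of $\K$ in $C^*(B,\K)$ consisting of finite-rank \emph{projections} exists precisely when $B$ is a quasidiagonal set of operators, which is not available here: the values $\gamma_n^{(t)}(a)$ are arbitrary elements of $\LH$. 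So the claimed bound of the multiplicative defect by $\lVert\gamma_n(ab)-\gamma_n(a)\gamma_n(b)\rVert$ plus commutator terms is false, and the construction of $\eta_n$ collapses.

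This is not a repairable slip but the crux of the proposition. For $A=C_0(0,1]\otimes D$ with $D$ separable, condition (iv) holds trivially (embed $D\subset\LH$ faithfully and take the constant sequence), while (i) for this $A$ says exactly that the cone over $D$ is quasidiagonal; so any proof of (iv)$\Rightarrow$(i) must contain Voiculescu's theorem on quasidiagonality of cones, and it cannot drop out of an Arveson-type compression (read on a single block, your argument would show $x\mapsto r_mxr_m$ is almost multiplicative on any separable $B\subset\LH$, i.e.\ that every separable $C^*$-algebra is quasidiagonal). The paper's proof invokes exactly this input \cite{Voi:qd}: it cuts $\gamma_n$ down to $C_0(0,1]\otimes D$ with $D\subset\LH$ separable, uses quasidiagonality of that cone to get an asymptotically isometric cpc asymptotic morphism $\theta_n\colon C_0(0,1]\otimes D\to\K$, composes with $j\otimes\id{D}$, $j(f)(s,t)=f(\min(s,t))$, so as to land back in $C_0(0,1]\otimes\K$ and keep the null-homotopy, and finally reparametrizes, $E_n^{(t)}=\bigoplus_{i\le n}\eta_n^{(tt_{n,i})}$, to make the time-one map injective. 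Your block-diagonal evaluation representation does make endpoint injectivity automatic, which is a genuine simplification of that last step, but it does not address the core difficulty; to fix the argument you should replace the Arveson compression by an appeal to quasidiagonality of the cone over the separable algebra generated by the values of the $\gamma_n$.
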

\begin{proof}
  (i) $\Rightarrow$ (ii) and (iii) $\Rightarrow$ (iv) are obvious since $\K\subset \LH$.
  The implications  (i) $\Rightarrow$ (iii) and (ii) $\Rightarrow$ (iv) are  are also straightforward.

 (iv) $\Rightarrow$ (i) Let $\gamma$ be as in (iv) with components $\gamma_n$.
 Since $\bm{\gamma}$ is injective, we must have $\limsup \|\gamma_n(a)\|=\|a\|$ for all $a\in A$.
 Let $(a_i)_i$ be a dense sequence in $A$. For each $i$ there is a strictly increasing sequence $(m(i,n))_n$ such that
 $\|\gamma_{m(i,n)}(a_i)\|\geq \|a_i\|-1/n,$ for all $n\geq 1$.
 If we define $\gamma'_n=\gamma_{m(1,n)}\oplus \gamma_{m(2,n)}\oplus \cdots \oplus \gamma_{m(n,n)}$, then
 $\lim\|\gamma'_n(a)\|=\|a\|$ for all $a\in A$. Therefore we may assume that $(\gamma_n)$ had this property in the first place.

 Since $A$ is separable, there is a separable $C^*$-algebra $D\subset \LH$ such that $\gamma_n(A)\subset D$ for all $n\geq 1$.
 Consider an injective $*$-homomorphism $j:C_0(0,1] \to C_0(0,1]\otimes C_0(0,1]$,  for example $j(f)(s,t)=f(\min(s,t))$.
 Since $C_0(0,1]\otimes D$ is quasidiagonal by \cite{Voi:qd}, there is a cpc asymptotic morphism
 $\{\theta_n:C_0(0,1]\otimes D \to \K\}_n$ with $\lim_n\|\theta_n(b)\|=\|b\|$ for all $b$. After passing to a subsequence $(\theta_n)_n$ we may arrange that the maps $\eta_n$
 obtained as the compositions
 \[
 \xymatrix{
	A\ar[r]^-{\gamma_n} &  C_0(0,1]\otimes D \ar[r]^-{j\otimes \id{D}} & C_0(0,1]\otimes C_0(0,1]\otimes D\ar[r]^-{\id{}\otimes \theta_n} &C_0(0,1]\otimes \K
 }
 \]
 define an asymptotic morphism $\{\eta_n:A \to C_0(0,1]\otimes \K\}_n$ such that \linebreak $\lim_n \|\eta_n(a)\|=\|a\|$ for all $a\in A$.

 We regard $\eta_n : A \to C\K$ as a continuous family of maps $\{\eta^{(t)}_n : A \to \K\}_{t\in [0,1]}$ with $\eta^{(0)}_n=0$.  Next we want to arrange that  $\lim_n\|\eta^{(1)}_n(a)\|=\|a\|$ for all $a\in A$.
 Since  $\lim_n\|\eta_n(a)\|=\|a\|$ for all $a\in A$, after passing to a subsequence of $(\eta_n)_n$, we may arrange for each $n\geq 1$,
 $\|\eta_n(a_i)\|>\|a_i\|-1/i$ for all $1\leq i \leq n$. Therefore for every $n$ and $i$ with $1\leq i \leq n$, there is $t_{n,i}\in [0,1]$ such that $\|\eta^{(t_{n,i})}_n(a_i)\|>\|a_i\|-1/i$.
 Define $E_n:A \to C\K$ by $E_n^{(t)}=\bigoplus_{i=1}^n \eta^{(tt_{n,i})}_n$ for $t\in [0,1]$.
 It follows immediately that $\{E_n:A \to C\K\}_n$ is a cpc asymptotic morphism such that $\lim_n\|E^{(1)}_n(a)\|=\|a\|$ for all $a\in A$.
\end{proof}

\begin{definition}\label{def:strong_AHZ}
(i) A separable $C^*$-algebra $A$ has \emph{property \textrm{(QH)}} if it satisfies one of the equivalent conditions from Proposition~\ref{prop:QH-equivalence}.

(ii) For a countable discrete group $G$, denote the character induced by the trivial
representation by $\iota \colon C^*(G) \to \C$ and set $I(G) = \ker(\iota)$. We
say that $G$ has \emph{property $\textrm{(QH)}$} if $I(G)$ has property \textrm{(QH)}.
One can reformulate this condition as follows.
There is a discrete injective ucp asymptotic morphism $\{\pi_n:C^*(G)\to M_{m(n)}(\C)\}_n$
which is homotopic to $\{\iota_n:C^*(G)\to M_{m(n)}(\C)\}_n$, where each $\iota_n$ is the multiple
$m(n) \cdot\iota$ of the trivial representation $\iota$.
\end{definition}

\begin{example} If $X$ is a compact connected metrizable space and $x_0\in X$, then $C_0(X\setminus x_0)$
has property \textrm{(QH)}. In particular, if $G$ is a torsion free countable discrete abelian group, then $I(G)\cong C_0(\widehat{G}\setminus \iota)$ has property \textrm{(QH)}.
\end{example}
\begin{remark} \label{rem:qd}
Let us note that if $A$ has  property \textrm{(QH)}, then $A$ must be quasidiagonal by a result of Voiculescu \cite{Voi:qd} and $A$ cannot have any nonzero projections. In particular, if a discrete group $G$ has property \textrm{(QH)},
then $G$ must be torsion free, see Remark~\ref{remark:final}.
It is easily verified that
\begin{itemize}
\item[(i)] Property \textrm{(QH)} passes to $C^*$-subalgebras.
\item[(ii)] If a separable $C^*$-algebra $A$ has property \textrm{(QH)} then so does $A\otimes_{min} B$
for any separable $C^*$-algebra $B$.
\end{itemize}
Other permanence properties are proved in Theorem~\ref{thm:permanence}.

\end{remark}

The following Lemma is crucial for the proof that property (QH) implies the existence
of additive inverses  of homotopy classes of discrete asymptotic morphisms. It is a slight generalization
of \cite[Lem.\ 5.3]{Dad-IJM}.
The proof is almost identical, except that one has to replace Voiculescu's theorem
and Stinespring's theorem by Kasparov's generalization of these
\cite{Kas:cp}.
We will use the notation from \cite[Sec.\ 5]{Dad-IJM}: If $A$ and $B$ are $C^*$-algebras,
$E$, $F$ are right Hilbert $B$-modules, $\fset \subset A$ is a finite set, $\epsilon > 0$ and
$\varphi \colon A \to \bdd{B}{E}$ and $\psi \colon A \to \bdd{B}{F}$ are two maps,
we write $\varphi \fleq{\fset}{\epsilon} \psi$ if there is an isometry $v \in \bdd{B}{E,F}$
such that $\lVert \varphi(a) - v^*\psi(a)v \rVert < \epsilon$ for all $a \in \fset$. If $v$
can be chosen to be a unitary, we write $\varphi \fsim{\fset}{\epsilon} \psi$. Moreover, we
write $\varphi \prec \psi$ if $\varphi \fleq{\fset}{\epsilon} \psi$ for all finite sets
$\fset$ and for all $\epsilon > 0$.

\begin{lemma} \label{lem:discrete_inverse}
Let $A$ and $B$ be separable unital $C^*$-algebras such that $A$ or $B$ is nuclear.
Let  $\{\varphi_n \colon A \to M_{k(n)}(B)\}_n$
and $\{\gamma_n \colon A \to M_{r(n)}(\C)\}_n$ be ucp discrete asymptotic morphisms. Suppose that
$(\gamma_n)$ is injective. Then there exist a sequence $(\omega(n))$ of disjoint
finite subsets of $\N$ with $\max \omega(n-1) < \min \omega(n)$ and a ucp discrete asymptotic
morphism  $\{\varphi^{\,\prime}_n \colon A \to M_{s(n)}(B)\}_n$ such that
$\left(\varphi_n \oplus \varphi^{\,\prime}_n\right) \sim (\gamma_{\omega(n)} \otimes 1_B)$, where
$\gamma_{\omega(n)} = \oplus_{i \in \omega(n)} \gamma_i$.
\end{lemma}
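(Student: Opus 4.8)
The plan is to follow the strategy of \cite[Lem.\ 5.3]{Dad-IJM} verbatim, substituting Kasparov's absorption and Stinespring-type theorems for right Hilbert $B$-modules \cite{Kas:cp} in place of the classical Voiculescu and Stinespring theorems, which is precisely what the preceding paragraph announces. The underlying idea is an Eilenberg-swindle-type absorption argument: since $(\gamma_n)$ is injective, the amplifications $\gamma_{\omega(n)} \otimes 1_B$ dominate, in the sense of $\fleq{\fset}{\epsilon}$, any given ucp map $A \to M_{k(n)}(B)$ provided $\omega(n)$ is chosen large enough; then one plays $\varphi_n$ against the ``remainder'' of the domination to produce the complementary morphism $\varphi'_n$. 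First I would fix a dense sequence $(a_i)$ in $A$, an increasing sequence of finite sets $\fset_n \subset A$ exhausting it, and $\epsilon_n \to 0$, and set up the bookkeeping so that all asymptotic-multiplicativity estimates for $\varphi_n$ and $\gamma_n$ are controlled on $\fset_n$ within $\epsilon_n$.

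The key steps, in order, are: (1) Because $(\gamma_n)$ is an injective ucp discrete asymptotic morphism into matrix algebras, the induced $*$-homomorphism $A \to \prod_n M_{r(n)}(\C)/\bigoplus_n M_{r(n)}(\C)$ is injective and unital; tensoring with $1_B$ gives an injective unital $*$-homomorphism into $\prod_n M_{r(n)}(B)/\bigoplus_n M_{r(n)}(B)$, i.e.\ a genuine $*$-homomorphism absorbing lots of ``room.'' (2) Invoke Kasparov's Weyl--von Neumann--Voiculescu theorem for the pair $(A,B)$ with $A$ or $B$ nuclear: any ucp map $A \to M_{k(n)}(B) \subset \bdd{B}{B^{k(n)}}$ is approximately unitarily equivalent, after adding a sufficiently large multiple of the absorbing representation coming from $(\gamma_n)$, to that representation alone. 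Concretely, for each $n$ one extracts a finite block $\omega(n)$ (with $\max\omega(n-1) < \min\omega(n)$) and a unitary $u_n \in M_{s(n)}(B)$, $s(n) = k(n) + \sum_{i\in\omega(n)} r(i)$, so that $\lVert u_n (\varphi_n(a)\oplus \psi_n(a)) u_n^* - (\gamma_{\omega(n)}(a)\otimes 1_B)\rVert < \epsilon_n$ for $a \in \fset_n$, where $\psi_n \colon A \to \bdd{B}{F_n}$ is the Stinespring-type complement; here one uses that $\gamma_{\omega(n)}\otimes 1_B$ is \emph{genuinely} a $*$-homomorphism up to $\epsilon_n$ on $\fset_n$, so the domination/absorption statement applies with uniform control. (3) Cut down $\psi_n$ to a ucp map $\varphi'_n \colon A \to M_{s(n) - k(n)}(B)$ using the projection complementary to the $M_{k(n)}(B)$-corner; a small-perturbation argument (functional calculus on a slightly non-unital/non-multiplicative map, as in \cite{Dad-IJM}) shows $(\varphi'_n)$ is again a ucp discrete asymptotic morphism. (4) Conclude that $(\varphi_n \oplus \varphi'_n) \sim (\gamma_{\omega(n)}\otimes 1_B)$ by construction, the unitaries being the $u_n$.

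The main obstacle is step (2): one must check that Kasparov's generalized absorption theorem can be applied \emph{with the quantitative, block-by-block control} needed for a discrete asymptotic morphism, rather than just for a fixed pair of honest representations. The point is that $\gamma_{\omega(n)}\otimes 1_B$ is only approximately multiplicative on $\fset_n$, so one cannot cite Kasparov's theorem as a black box; instead one applies it to the honest $*$-homomorphism $A \to M_{r(i)}(B)$ that best approximates $\gamma_i$ (or to the limit $*$-homomorphism into the quotient, then lifts), tracking how large $\omega(n)$ must be taken so that a single copy of the limit representation already dominates $\varphi_n$ to within $\epsilon_n$ on $\fset_n$, and only afterwards perturbs back to the $\gamma_i$. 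Making this ``how large is large enough'' quantifier precise, and verifying that the resulting $\varphi'_n$ inherits asymptotic multiplicativity after the functional-calculus correction, is where essentially all the work lies; everything else is the routine reindexing and diagonal-sequence bookkeeping already present in \cite[Sec.\ 5]{Dad-IJM}.
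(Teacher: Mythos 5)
Your outline matches the paper's skeleton at the two ends: reduce the lemma to producing dominations $\varphi_n \fleq{\fset_n}{\epsilon_n} \gamma_{\omega(n)}\otimes 1_B$ and then let the machinery of \cite[Sec.~5]{Dad-IJM} (your steps (3)--(4), which is essentially \cite[Lem.~5.1]{Dad-IJM}) manufacture $\varphi'_n$ and the unitaries. But the central step (2) is where the lemma actually lives, and the two fixes you sketch for it do not work. First, there is in general no honest $*$-homomorphism $A \to M_{r(i)}(\C)$ ``that best approximates $\gamma_i$'': an approximately multiplicative ucp map into a matrix algebra need not be close to any genuine representation, and producing such approximants is essentially the kind of problem this circle of results is designed to avoid. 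Second, applying Kasparov's theorem ``to the limit $*$-homomorphism into the quotient, then lifting'' is circular: the limit homomorphism lands in $\prod_n M_{r(n)}(\C)/\bigoplus_n M_{r(n)}(\C)$, which is not a representation on a Hilbert $B$-module, Kasparov's absorption theorem does not apply to it, and any cpc lift just returns the original approximately multiplicative sequence, so no block-by-block unitary control is gained.

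The missing idea in the paper is to decouple the two difficulties. One never compares $\varphi_n$ with the approximately multiplicative $\gamma_{\omega(n)}$ directly. Instead, for the inductive step one takes the \emph{infinite tail} $\Gamma = \oplus_{i \geq m}\gamma_i \colon A \to \prod_{i\geq m} M_{r(i)}(\C) \subset \LH$, which (by injectivity of $(\gamma_n)$ and asymptotic multiplicativity) is a unital $*$-monomorphism \emph{modulo the compacts}; the Brown--Herrero lemma \cite[Lem.~2.1]{Brown-Herrero} (equivalently \cite[Lem.~5.2]{Dad-IJM}) then gives $\Gamma \fsim{\fset_n}{\epsilon_n} \pi$ for a fixed faithful unital representation $\pi$ with $\pi(A)\cap\mathcal{K}(H)=\{0\}$ --- this is the scalar-level step that absorbs the ``only approximately multiplicative'' defect, and it has no analogue in your proposal. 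Kasparov's theorems from \cite{Kas:cp} are then used only for honest objects: Theorem~3 to dilate $\varphi_n$ to a representation $\pi_n$ on a Hilbert $B$-module, and Theorem~6 (this is where nuclearity of $A$ or $B$ enters) to get $\pi_n \oplus (\pi\otimes 1_B) \sim \pi\otimes 1_B$, yielding $\varphi_n \fleq{\fset_n}{\epsilon_n} \Gamma\otimes 1_B$. Only at the very end is the finite block $\omega(n)$ extracted, by approximating the isometry $v \in \bdd{B}{B^{k(n)},\ltwo{B}}$ by an isometry into a finite corner $B^N \subset \ltwo{B}$ (density of projections of $M_\infty(B)$ in those of $B\otimes\mathcal{K}(H)$), so that $\omega(n)=\{m,\dots,m+N\}$. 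Without the tail-plus-Brown--Herrero step, your ``how large is large enough'' quantifier cannot be discharged, so as written the proposal has a genuine gap at its key step.
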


\begin{proof} \label{pf:discrete_inverse}
There exists a sequence $\fset_1 \subseteq \fset_2 \subseteq \dots$ of finite selfadjoint
subsets of $A$ consisting of unitaries such that their union is dense in $U(A)$ and a
sequence $\epsilon_1 \geq \epsilon_2 \geq \dots$ convergent to zero such that $\varphi_n$
is $(\fset_n, \epsilon_n)$-multiplicative. By \cite[Lem.\ 5.1]{Dad-IJM} it suffices to
construct a sequence $(\omega(n))$ such that $\varphi_n \fleq{\fset_n}{\epsilon_n}
\gamma_{\omega(n)}$.

We will construct $(\omega(n))$ inductively. Suppose that we already have $\omega(1), \dots,
\omega(n-1)$ and choose $m > \max \omega(n-1)$ such that
\[
	\Gamma = \oplus_{i \geq m} \gamma_i \colon A \to \prod_{i \geq m} M_{r(i)}(\C) \subset \mathcal{L}(H)
\]
is $(\fset_n, \tfrac{\epsilon_n^2}{4})$-multiplicative. Observe that $\Gamma \colon A \to \mathcal{L}(H)$ is a unital $*$-mo\-no\-mor\-phism modulo the compact operators.
Let $\pi \colon A \to \mathcal{L}(H)$ be a faithful unital representation with $\pi(A) \cap \mathcal{K}(H) = \{0\}$, then we have $\Gamma \fsim{\fset_n}{\epsilon_n} \pi$ by \cite[Lem.\ 2.1]{Brown-Herrero} (or \cite[Lem.\ 5.2]{Dad-IJM}).
Note that there is a natural embedding $\mathcal{L}(H) \to \bdd{B}{\ltwo{B}}$, which sends $T$ to $T \otimes 1_B$.
Moreover, $\Gamma \otimes 1_B \fsim{\fset_n}{\epsilon_n} \pi \otimes 1_B$. Let $\pi_n$ be the Stinespring dilation of $\varphi_n$ obtained via \cite[Thm.\ 3]{Kas:cp}.
We have that $\pi_n \oplus (\pi \otimes 1_B) \sim (\pi \otimes 1_B)$ by \cite[Thm.\ 6]{Kas:cp}. Thus,
\[
	\varphi_n \prec \pi_n \prec \pi_n \oplus (\pi \otimes 1_B) \sim (\pi \otimes 1_B) \fsim{\fset_n}{\epsilon_n} \Gamma \otimes 1_B\ ,
\]
hence $\varphi_n \fleq{\fset_n}{\epsilon_n} \Gamma \otimes 1_B$. Let $v \in \bdd{B}{B^{k(n)}, \ltwo{B}}$ be a partial isometry such that $\lVert \varphi_n(a) - v^*(\Gamma \otimes 1_B)(a)v \rVert < \epsilon_n$ for all $a \in \fset_n$. The projections of $M_{\infty}(B)$ are dense in those of $B \otimes \mathcal{K}(H)$. Thus, we can find an isometry $w \in \bdd{B}{B^{k(n)},B^N}$ which approximates $v$ in norm sufficiently well so that
$\lVert \varphi_n(a) - w^*(\Gamma \otimes 1_B)(a)w \rVert < \epsilon_n$ for all $a \in \fset_n$.
Here we identify $B^N \subset \ltwo{B}$ as the submodule of the first $N$ coordinates. It follows that if we let $\omega(n) = \{m, m+1, \dots, m+N\}$, then $\varphi_n \fleq{\fset_n}{\epsilon_n} \gamma_{\omega(n)}$.
\end{proof}

\begin{lemma}\label{lemma:group structure}
Let $\theta:M\to G$ be a surjective morphism of monoids. If $G$ is a group and  all the elements of $\theta^{-1}(1_G)$ are invertible in $M$, then $M$ is a group.
\end{lemma}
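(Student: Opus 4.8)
The plan is to show that an arbitrary element $m\in M$ admits a two-sided inverse. Since $G$ is a group, $\theta(m)^{-1}\in G$ is defined, and since $\theta$ is surjective we may choose $n\in M$ with $\theta(n)=\theta(m)^{-1}$. Heuristically $n$ is an inverse of $m$ ``modulo the kernel'', and the hypothesis on $\theta^{-1}(1_G)$ is exactly what should be needed to upgrade this to a genuine inverse.

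First I would observe that $\theta(mn)=\theta(m)\theta(n)=1_G$ and likewise $\theta(nm)=1_G$, so both $mn$ and $nm$ lie in $\theta^{-1}(1_G)$ and are therefore invertible in $M$ by hypothesis. Writing $p=(mn)^{-1}$ and $q=(nm)^{-1}$, the relation $(mn)p=1_M$ gives $m\,(np)=1_M$, so $m$ has a right inverse, while $q(nm)=1_M$ gives $(qn)\,m=1_M$, so $m$ has a left inverse.

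Finally I would invoke the elementary fact that an element of a monoid possessing both a left inverse and a right inverse is invertible, the two one-sided inverses being necessarily equal: from $(qn)m=1_M$ and $m(np)=1_M$ one gets $qn=(qn)\,m\,(np)=np$, and this common element is a two-sided inverse of $m$. Since $m$ was arbitrary, $M$ is a group.

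There is essentially no obstacle here; the only point requiring a little care is to keep track of left versus right inverses, since $M$ is not assumed abelian, and to apply the hypothesis exactly where it is needed, namely to the ``off-diagonal'' products $mn$ and $nm$ — not to $m$ itself — which are the elements that land in $\theta^{-1}(1_G)$.
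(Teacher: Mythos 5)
Your proof is correct and follows essentially the same route as the paper: choose $n$ with $\theta(n)=\theta(m)^{-1}$, use the hypothesis to invert $mn$ and $nm$, and extract a right and a left inverse of $m$, which must coincide. The paper's argument is identical in structure (with $x,x',y_1,y_2$ in place of your $m,n,p,q$), so there is nothing to add.
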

\begin{proof}
  For $x\in M$, choose $x'\in M$ such that $\theta(x')=\theta(x)^{-1}$. Then $\theta(xx')=\theta(x'x)=1_G$. It follows that  both $xx'$ and $x'x$ are invertible and hence there are $y_1,y_2\in M$ such that $x(x'y_1)=1_M=(y_2x')x$.  Thus $x$ is invertible.
\end{proof}

\begin{proposition} \label{prop:dam=group}
Let $A$, $B$ be separable $C^*$-algebras such that either $A$ or $B$ is nuclear. If $A$ has property \textrm{(QH)}, then $[[A, B \otimes \K]]^{\rm cp}_{\N}$ is a group.
\end{proposition}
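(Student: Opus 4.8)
The plan is to apply Lemma~\ref{lemma:group structure} to the monoid $M = [[A, B \otimes \K]]^{\rm cp}_{\N}$ together with the obvious monoid structure given by (block) direct sum. We need a surjective monoid homomorphism $\theta \colon M \to G$ onto a group $G$ such that every element of $\theta^{-1}(1_G)$ is invertible. The natural candidate for $G$ is the Kasparov group, realized via the pairing from Remark~\ref{rem:category}: indeed, $[[A,B]]^{\rm cp}_{\N} \times [[B,C]]^{\rm cp} \to [[A,C]]^{\rm cp}_{\N}$ is associative with respect to composition from $[[C,D]]$, and in particular $[[A, B\otimes\K]]^{\rm cp}_{\N}$ is a module over the ring $[[B\otimes\K, B\otimes\K]]^{\rm cp}$, which contains the class of the identity. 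Composing a discrete cpc asymptotic morphism with a (non-discrete) cpc asymptotic morphism implementing a $KK$-equivalence or, more concretely, with the suspension map, gives a homomorphism $\theta \colon [[A, B \otimes \K]]^{\rm cp}_{\N} \to [[SA, SB \otimes \K]]^{\rm cp} \cong KK(A,B)$; this is surjective because every cpc asymptotic morphism $SA \to SB\otimes\K$ is homotopic to one that factors through a discrete one after desuspending (using quasidiagonality of $SA$, as in the original unsuspension arguments of Connes--Higson and Thomsen). So the first step is to pin down the precise form of $\theta$ and check it is a well-defined surjective monoid homomorphism.

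The second and main step is to show that every element in the kernel $\theta^{-1}(0)$ is invertible in $M$. Let $[[\varphi_n]] \in \theta^{-1}(0)$, represented (using Remark~\ref{rem:compacts_matrices}) by a cpc discrete asymptotic morphism $\varphi_n \colon A \to M_{k(n)}(B)$. After passing to unitalizations we may assume $A$, $B$ unital and $\varphi_n$ ucp (the unitalization does not affect homotopy classes essentially, since a nonunital-to-unital argument as in Thomsen works). Since $A$ has property \textrm{(QH)}, by Definition~\ref{def:strong_AHZ} and Proposition~\ref{prop:QH-equivalence} there is a null-homotopic injective cpc discrete asymptotic morphism $\eta_n \colon A \to \K$; compressing as in Remark~\ref{rem:compacts_matrices} we get an injective ucp discrete asymptotic morphism $\gamma_n \colon A \to M_{r(n)}(\C)$ which, after tensoring with $1_B$ and rescaling, is null-homotopic. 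Now apply Lemma~\ref{lem:discrete_inverse} to $(\varphi_n)$ and $(\gamma_n)$: there is a sequence of disjoint finite sets $\omega(n) \subset \N$ and a ucp discrete asymptotic morphism $\varphi'_n \colon A \to M_{s(n)}(B)$ with $(\varphi_n \oplus \varphi'_n) \sim (\gamma_{\omega(n)} \otimes 1_B)$. The right-hand side is a direct sum of copies of the $\gamma_i$, each of which is (after tensoring with $1_B$) null-homotopic, hence $(\gamma_{\omega(n)}\otimes 1_B)$ is null-homotopic, i.e.\ its class is $0$ in $M$. Therefore $[[\varphi_n]] + [[\varphi'_n]] = 0$ in $M$, so $[[\varphi_n]]$ has an additive inverse. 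This is exactly the invertibility hypothesis of Lemma~\ref{lemma:group structure}, and we conclude that $M$ is a group.

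The hard part is the bookkeeping in the reduction to the unital/matrix setting and in checking that null-homotopy is preserved under the operations used: one must make sure that the homotopy witnessing \textrm{(QH)} for $A \to \K$ can be transported to one for $\gamma_{\omega(n)} \otimes 1_B \colon A \to M_{N}(B)$ compatibly with the unitary equivalence produced by Lemma~\ref{lem:discrete_inverse}, and that the unitary equivalence $(\varphi_n\oplus\varphi'_n)\sim(\gamma_{\omega(n)}\otimes 1_B)$ really does give equality of classes in $[[A, B\otimes\K]]^{\rm cp}_{\N}$ (unitarily equivalent discrete asymptotic morphisms are homotopic through conjugation by a path of unitaries in a matrix algebra over $B$, using connectedness of $U(M_N(\widetilde B))$ after a further stabilization). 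A secondary subtlety is the passage between $A$ and its unitalization $\widetilde A$: one checks that $\widetilde A$ still has property \textrm{(QH)} in the relevant sense, or works directly with the non-unital formulation, absorbing a rank-one correction. Once these points are handled, the argument is a clean application of the two lemmas.
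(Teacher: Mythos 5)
Your treatment of the unital case is essentially the paper's argument: unitize $A$ and $\varphi_n$, take the injective null-homotopic $(\eta_n)$ supplied by property \textrm{(QH)}, feed both into Lemma~\ref{lem:discrete_inverse}, and observe that $(\varphi_n\oplus\varphi_n')\sim(\eta_{\omega(n)}\otimes 1_B)$ is null-homotopic, so \emph{every} class has an additive inverse. Note that this makes your first step superfluous: you never use that $[[\varphi_n]]$ lies in a kernel. That first step is also not salvageable as stated: the surjectivity of a map $[[A,B\otimes\K]]^{\rm cp}_{\N}\to KK(A,B)$ induced by suspension is not available here --- it is essentially the unsuspension theorem of \cite{DadLor:unsusp}, which holds precisely when $A$ is homotopy symmetric and is what the present proposition is ultimately feeding into. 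Quasidiagonality of $SA$ does not give it. So delete that step rather than rely on it.

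The genuine gap is the reduction to unital $B$, which you dismiss with ``a nonunital-to-unital argument as in Thomsen works.'' The target in the statement is $B\otimes\K$, which is never unital, and Lemma~\ref{lem:discrete_inverse} (which rests on Kasparov's Stinespring and absorption theorems for Hilbert $B$-modules) genuinely needs $B$ unital. Running your argument over $\widetilde B$ only shows that the image of $[[\varphi_n]]$ is invertible in $[[A,\widetilde B\otimes\K]]^{\rm cp}_{\N}$; since the monoid map $[[A,B\otimes\K]]^{\rm cp}_{\N}\to[[A,\widetilde B\otimes\K]]^{\rm cp}_{\N}$ is not known to be injective (the inverse $\varphi_n'$ and the null-homotopy may take values outside matrices over $B$), this does not yield invertibility in $[[A,B\otimes\K]]^{\rm cp}_{\N}$. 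This is exactly where the paper spends the second half of its proof: it builds a Puppe-type exact sequence for $[[A,-]]^{\rm cp}_{\N}$ applied to the split extension $0\to B\otimes\K\to\widetilde B\otimes\K\to\K\to 0$, identifies the mapping cone with the ideal in the category $\mathit{Asym}$ via \cite[Prop.~3.2]{DadLor:unsusp} together with Thomsen's composition pairing (this is also where the hypothesis that $A$ or $B$ is nuclear is used, to pass between cpc and general classes or to keep the cone nuclear), obtains the short exact sequence \eqref{eq:splitt}, and only then invokes Lemma~\ref{lemma:group structure} --- with the unital case supplying the group structure on $[[A,\widetilde B\otimes\K]]^{\rm cp}_{\N}$ and $[[A,\K]]^{\rm cp}_{\N}$. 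Without an argument of this kind your proof only covers unital $B$ (and, via Remark~\ref{rem:compacts_matrices}, targets $M_{k(n)}(B)$ with $B$ unital), not the proposition as stated.
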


\begin{proof}
We will first prove the proposition for unital $C^*$-algebras $B$.  Let  $\{\varphi_n \colon A \to M_{r(n)}(B)\}_n$ be a discrete cpc asymptotic morphism. By Remark~\ref{rem:compacts_matrices} it suffices to construct an additive inverse of $[[\varphi_n]]$.
Since $A$ has property \textrm{(QH)}, there exists an injective cpc discrete asymptotic morphism  $\{\eta_n \colon A \to M_{s(n)}(\C)\}_n$ which is null-homotopic.

Let $\widetilde{A}$  denote the unitization of $A$  and set $R(n) = r(n)+1$.
 Let $\widetilde{\varphi}_n \colon \widetilde{A} \to M_{R(n)}(B)$ be the unital extension of $\varphi_n$, so that
  $\widetilde{\varphi}_n(1) = 1_{M_{R(n)}} \otimes 1_B$. This is a ucp asymptotic morphism. Likewise, let $\widetilde{\eta}_n \colon \widetilde{A} \to M_{S(n)}(\C)$ for $S(n) = s(n)+1$ be the unitization of $\eta_n$. Note that $(\widetilde{\eta}_n)$ is still injective. From Lemma~\ref{lem:discrete_inverse} we obtain sequences $(\omega(n))$ and $\{\widetilde{\varphi}^{\,\prime}_n \colon A \to M_{T(n)}(B)\}_n$ such that $(\widetilde{\varphi}_n \oplus \widetilde{\varphi}^{\,\prime}_n) \sim (\widetilde{\eta}_{\omega(n)} \otimes 1_B)$. Let $j \colon A \to \widetilde{A}$ be the inclusion map and set $\varphi^{\,\prime}_n = \widetilde{\varphi}^{\,\prime}_n \circ j$.
   Then $(\varphi_n \oplus \varphi^{\,\prime}_n) \sim (\eta_{\omega(n)} \otimes 1_B)$ which is null-homotopic.

Now consider the case when $B$ is nonunital. Observe that for any short exact sequence of separable $C^*$-algebras
\begin{equation}\label{short:exact}0 \to I \to B \stackrel{p}\to D \to 0\end{equation}
and an arbitrary separable $C^*$-algebra $A$ there is a corresponding long exact Puppe sequence of pointed sets:
\begin{equation} \label{eqn:Puppe}
\xymatrix@C=0.5cm{
	[[A, SB]]_{\N}^{\rm cp} \ar[r] & [[A, SD]]_{\N}^{\rm cp} \ar[r] & [[A, C_p]]_{\N}^{\rm cp} \ar[r] & [[A, B]]_{\N}^{\rm cp} \ar[r] & [[A, D]]_{\N}^{\rm cp}
}
\end{equation}
where $C_p = \left\{ (b,f) \in B \oplus C_0([0,1),D)\ |\ f(0) = p(b) \right\}$ is the mapping cone of the $*$-homomorphism $p$.
The proof of exactness is entirely similar to the proof  for the Puppe sequence in $E$-theory, see \cite[Prop. 6]{Dadarlat:anote}.
 Indeed as shown in the proof of \cite[Thm.~3.8]{Rosenberg:puppe}, the mapping cone of the map $C_p\to B$, $(b,f)\mapsto b$ is homotopic as a $C^*$-algebra to $SD$.

If the short exact sequence \eqref{short:exact} splits, the first and the last map in \eqref{eqn:Puppe} are surjective and we obtain the short exact sequence
\[
\xymatrix@C=0.8cm{
	0 \ar[r] & [[A, C_p]]_{\N}^{\rm cp} \ar[r] & [[A, B]]_{\N}^{\rm cp} \ar[r] & [[A, D]]_{\N}^{\rm cp} \ar[r] & 0.
} 	
\]
Moreover, it was proven in \cite[Prop.~3.2]{DadLor:unsusp} that if the short exact sequence \eqref{short:exact} splits, the canonical homomorphism $I \to C_p$ has an inverse in the category $\mathit{Asym}$ of separable $C^*$-algebras and homotopy classes of (non-discrete) asymptotic morphisms.
 On the other hand   \cite[Thm.~7.2]{Thomsen:discrete} shows that an isomorphism  $B_1\cong B_2$ in the category $\mathit{Asym}$ induces an isomorphism $[[A,B_1]]_{\N}\cong [[A,B_2]]_{\N}$ and hence $[[A,B_1]]_{\N}^{\rm cp}\cong [[A,B_2]]_{\N}^{\rm cp}$ if $A$ is nuclear.

 If $B$ is nuclear, then so are $D$, $I$ and $C_p$ and hence
$I$ is isomorphic to $C_p$ in the category $\mathit{Asym}^{\rm cp}.$  By using the version
of \cite[Thm.~7.2]{Thomsen:discrete} for cpc asymptotic morphisms that was mentioned in Remark~\ref{rem:category},
we see that $[[A,C_p]]_{\N}^{\rm cp}\cong [[A,I]]_{\N}^{\rm cp}$ if $B$ is nuclear.
Therefore if either $A$ or $B$ is nuclear,  the following is a short sequence of pointed sets:
\begin{equation}\label{eq:splitt}
\xymatrix@C=0.5cm{
0\ar[r] & [[A, I]]_{\N}^{\rm cp} \ar[r] & [[A, B]]_{\N}^{\rm cp} \ar[r] & [[A, D]]_{\N}^{\rm cp}\ar[r] &0.
}
\end{equation}

In the case of the split extension $0 \to B \otimes \K \to \widetilde{B} \otimes \K \to \K \to 0,$ we obtain from  \eqref{eq:splitt} the following short exact sequence of pointed sets
\[
\xymatrix@C=0.8cm{
	0 \ar[r] & [[A, B \otimes \K]]_{\N}^{\rm cp} \ar[r]^-{\alpha} & [[A, \widetilde{B} \otimes \K]]_{\N}^{\rm cp} \ar[r]^-{\beta} & [[A, \K]]_{\N}^{\rm cp} \ar[r] & 0.
} 	
\]
Note that $[[A, B \otimes \K]]_{\N}^{\rm cp}$ is an abelian monoid, and by the first part of the proof, $[[A, \widetilde{B} \otimes \K]]_{\N}^{\rm cp}$ and $[[A, \K]]_{\N}^{\rm cp}$ are abelian groups. All monoids are pointed by their respective neutral elements, the map $\alpha$ is a monoid homomorphism and $\beta$ is a group homomorphism. Exactness implies
that the sequence $0 \to [[A, B \otimes \K]]_{\N}^{\rm cp} \to \ker(\beta)\to 0$ is also exact.
By Lemma~\ref{lemma:group structure} we conclude that $[[A, B \otimes \K]]_{\N}^{\rm cp}$ is an abelian group.
\end{proof}

\begin{proposition} \label{prop:QH_vs_group}
Let $A$ be a separable $C^*$-algebra.

\noindent (i) $A$ has property \textrm{(QH)} if and only if $A$ is quasidiagonal and $[[A, \K]]_{\N}^{\rm cp}$ is a group.

\noindent (ii) If $A$ is nuclear, then $A$ has property \textrm{(QH)} if and only if $[[A, A \otimes \K]]_{\N}$ is a group.
\end{proposition}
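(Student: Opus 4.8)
The proof will combine Propositions~\ref{prop:QH-equivalence} and~\ref{prop:dam=group} with Voiculescu's block-diagonal characterization of quasidiagonality \cite{Voi:qd} and the Choi--Effros lifting theorem; I do not expect any new construction to be needed.

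For~(i), one implication is immediate: if $A$ has property~\textrm{(QH)} then $A$ is quasidiagonal by Remark~\ref{rem:qd}, and Proposition~\ref{prop:dam=group} applied with the nuclear $C^*$-algebra $B=\C$ shows that $[[A,\K]]^{\rm cp}_{\N}=[[A,\C\otimes\K]]^{\rm cp}_{\N}$ is a group. For the converse, suppose $A$ is quasidiagonal and $[[A,\K]]^{\rm cp}_{\N}$ is a group. By \cite{Voi:qd} there is an injective cpc discrete asymptotic morphism $\{\varphi_n\colon A\to\K\}_n$ (one may take the $\varphi_n$ with values in matrix algebras). Since $[[A,\K]]^{\rm cp}_{\N}$ is a group, $[[\varphi_n]]$ has an additive inverse, so there is a cpc discrete asymptotic morphism $\{\psi_n\colon A\to\K\}_n$ with $\{\varphi_n\oplus\psi_n\}_n$ null-homotopic (using $M_2(\K)\cong\K$ to form $\oplus$). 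From $\norm{\varphi_n(a)\oplus\psi_n(a)}=\max(\norm{\varphi_n(a)},\norm{\psi_n(a)})\ge\norm{\varphi_n(a)}$ we see that $\eta_n:=\varphi_n\oplus\psi_n$ is again injective, so $\{\eta_n\colon A\to\K\}_n$ is a null-homotopic injective cpc discrete asymptotic morphism, i.e.\ condition~(i) of Proposition~\ref{prop:QH-equivalence} holds and $A$ has property~\textrm{(QH)}.

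For~(ii), assume $A$ is nuclear; then the Choi--Effros theorem gives $[[A,B]]_{\N}\cong[[A,B]]^{\rm cp}_{\N}$ for every separable $C^*$-algebra $B$, exactly as in the non-discrete case recalled in the introduction, so it suffices to prove that $[[A,A\otimes\K]]^{\rm cp}_{\N}$ is a group if and only if $A$ has property~\textrm{(QH)}. The forward direction is Proposition~\ref{prop:dam=group} with $B=A$. For the converse, assume $[[A,A\otimes\K]]^{\rm cp}_{\N}$ is a group. The class of some fixed injective $*$-homomorphism $\iota_A\colon A\to A\otimes\K$ (for instance $a\mapsto a\otimes e$ with $e\in\K$ a rank-one projection) then has an additive inverse, represented by a cpc discrete asymptotic morphism $\{\psi_n\colon A\to A\otimes\K\}_n$, and $\eta_n:=\iota_A\oplus\psi_n$ is a null-homotopic injective cpc discrete asymptotic morphism from $A$ to $A\otimes\K$ (via $A\otimes M_2(\K)\cong A\otimes\K$, and injectivity of $\iota_A$). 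A null-homotopy of $(\eta_n)$, after reversing the parameter interval, is a cpc discrete asymptotic morphism $\{\Phi_n\colon A\to C(A\otimes\K)\}_n$ with $\Phi_n^{(1)}=\eta_n$, where $C(A\otimes\K)=C_0(0,1]\otimes(A\otimes\K)$. Since $\norm{\Phi_n(a)}\ge\norm{\Phi_n^{(1)}(a)}=\norm{\eta_n(a)}$, the induced $*$-homomorphism $\bm{\Phi}\colon A\to\prod_n C(A\otimes\K)/\bigoplus_n C(A\otimes\K)$ is injective, and it lifts to the cpc map $(\Phi_n)_n\colon A\to\prod_n C(A\otimes\K)$. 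Fixing a faithful representation of the separable $C^*$-algebra $A\otimes\K$ on $H$ and composing with the resulting isometric inclusion $C(A\otimes\K)\hookrightarrow C_0(0,1]\otimes\LH=C\LH$, we obtain an injective $*$-homomorphism $A\to\prod_n C\LH/\bigoplus_n C\LH$ that lifts to a cpc map $A\to\prod_n C\LH$. This is condition~(iv) of Proposition~\ref{prop:QH-equivalence}, so $A$ has property~\textrm{(QH)}.

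The step I expect to be the main obstacle is this last one: in the converse half of~(ii) one must produce condition~(iv) of Proposition~\ref{prop:QH-equivalence} \emph{without} yet knowing that $A$ (equivalently $A\otimes\K$) is quasidiagonal, and this is precisely why it matters that~(iv) allows the target $\LH$ and not only $\K$; quasidiagonality of $A$ is only recovered afterwards, from property~\textrm{(QH)}, via Remark~\ref{rem:qd}. The remaining ingredients --- the identifications $M_2(\K)\cong\K$ and $A\otimes M_2(\K)\cong A\otimes\K$ used to define the monoid operations, and the fact that a direct sum with an injective morphism is again injective --- are routine.
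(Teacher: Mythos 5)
Your proof is correct and follows essentially the same route as the paper: Voiculescu plus Proposition~\ref{prop:dam=group} for the forward directions, and forming a direct sum with an additive inverse to produce a null-homotopic injective cpc discrete asymptotic morphism, then invoking Proposition~\ref{prop:QH-equivalence}. The only (cosmetic) difference is in the converse of (ii): the paper first replaces $A$ by $A\otimes\K$, takes the inverse of $[[\id{A}]]$ and composes with a faithful representation to land in condition (ii) of Proposition~\ref{prop:QH-equivalence}, whereas you keep the corner embedding $a\mapsto a\otimes e$ and pass through the cone-valued null-homotopy to reach condition (iv) --- the same argument in slightly different clothing.
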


\begin{proof}
(i)  If $A$ has property \textrm{(QH)}, then $A$ is quasidiagonal by \cite{Voi:qd}. Proposition~\ref{prop:dam=group} implies that $[[A, \K]]^{\rm cp}_{\N}$ is a group.  For the other direction, since $A$ is quasidiagonal, there is an injective cpc discrete asymptotic morphism
$\{\varphi_n:A \to \K\}_n$. Let $(\varphi^{\,\prime}_n)$ be such that $[[\varphi^{\,\prime}_n]]=-[[\varphi_n]]$ in  $[[A,\K]]_{\N}^{\rm cp}$.
Then $(\eta_n)=(\varphi_n\oplus \varphi^{\,\prime}_n)$ is injective and null-homotopic.

(ii) If $A$ is nuclear, then  $[[A,B]]^{\rm cp}\cong [[A,B]]$ for any separable $C^*$-algebra $B$.  Suppose that $A$ has property \textrm{(QH)}.
Proposition~\ref{prop:dam=group} implies that $[[A,A \otimes \K]]_{\N}$ is a group.

 For the other direction note that since $[[A, B \otimes \K]]_{\N} \cong [[A \otimes \K, B \otimes \K]]_{\N}$ we may assume that $A\cong A \otimes \K$.
Since $[[\id{A}]]$ has an additive inverse in $[[A, A]]_{\N}$ it follows
that there is an injective cpc discrete asymptotic morphism  $\{\varphi_n:A \to A\}_n$ which is null-homotopic. By composing $\varphi_n$ with a representation of $A$ we find  an injective cpc discrete asymptotic morphism  $\{\theta_n:A \to L(H)\}_n$ which is null-homotopic. We conclude by applying
Proposition~\ref{prop:QH-equivalence}.
\end{proof}

\section{Nuclear homotopy symmetric $C^*$-algebras}
\begin{theorem} \label{thm:QH_implies homotopy symmetric}
Let $A$ be a separable, nuclear $C^*$-algebra. Then $A$ has property \textrm{(QH)} if and only if $A$ is homotopy symmetric.  In either case,  $[[A, B \otimes \K]]\cong E(A,B)\cong KK(A,B)$ for any separable $C^*$-algebra $B$.
\end{theorem}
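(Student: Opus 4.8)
The plan is to read the equivalence off from the discrete picture developed in Section~2, since the bulk of the work has already been done in Proposition~\ref{prop:dam=group} and Proposition~\ref{prop:QH_vs_group}. I will invoke two external inputs: the discretization theorem of Thomsen \cite{Thomsen:discrete}, which for all separable $C^*$-algebras $A$, $B$ provides a natural isomorphism of abelian monoids $[[A,B\otimes\K]]_{\N}\cong [[A,B\otimes\K]]$, and the unsuspension theorem of \cite{DadLor:unsusp}, \cite{Dad:unsusp-annalen}. Because $A$ is nuclear, the Choi--Effros lifting theorem further identifies the completely positive variants with the general ones, so that $[[A,B\otimes\K]]^{\rm cp}_{\N}$, $[[A,B\otimes\K]]_{\N}$, $[[A,B\otimes\K]]^{\rm cp}$ and $[[A,B\otimes\K]]$ are all naturally isomorphic as abelian monoids; I will use these identifications freely.

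For the forward implication, assuming $A$ has property~\textrm{(QH)}, I would take $B=A$ in Proposition~\ref{prop:dam=group} (legitimate since $A$ is nuclear) to conclude that $[[A,A\otimes\K]]^{\rm cp}_{\N}$ is a group; by the identifications above $[[A,A\otimes\K]]$ is then a group, i.e.\ $[[\id{A}]]$ has an additive inverse, which is precisely homotopy symmetry of $A$. For the converse, if $A$ is homotopy symmetric then $[[A,A\otimes\K]]$, hence $[[A,A\otimes\K]]_{\N}$, is a group, and Proposition~\ref{prop:QH_vs_group}~(ii) returns property~\textrm{(QH)}. The delicate point is not any single step but the bookkeeping: one must check that the comparison isomorphisms of Thomsen and of Choi--Effros respect block-diagonal addition, so that ``being a group'' genuinely transfers across them; this is routine, as all of these maps are additive by construction, but it is the thing to be careful about.

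For the last assertion I would assume $A$ homotopy symmetric (equivalently, that it has property~\textrm{(QH)}) and apply the unsuspension theorem to see that the canonical suspension map $[[A,B\otimes\K]]\to E(A,B)$ is an isomorphism for every separable $C^*$-algebra $B$. Since $A$, and hence $SA$, is nuclear, the Choi--Effros theorem also gives $E(A,B)=[[SA,SB\otimes\K]]\cong [[SA,SB\otimes\K]]^{\rm cp}=KK(A,B)$, exactly as recalled in the introduction; concatenating the two isomorphisms yields $[[A,B\otimes\K]]\cong E(A,B)\cong KK(A,B)$. I do not expect a serious obstacle beyond citing Thomsen's discretization result and the unsuspension theorem in the right generality: the genuinely new content lives in the earlier propositions, and this theorem is their assembly.
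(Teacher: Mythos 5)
The decisive step in your argument is the claim that Thomsen's paper provides a natural isomorphism of monoids $[[A,B\otimes\K]]_{\N}\cong [[A,B\otimes\K]]$ for all separable $A,B$. No such ``discretization isomorphism'' exists, and it is not what \cite{Thomsen:discrete} proves: restriction to integer times gives a monoid map $[[A,B\otimes\K]]\to [[A,B\otimes\K]]_{\N}$, but it is in general neither surjective nor injective. A simple test case: $[[\C,\K]]\cong\N$ (the rank of an almost-projection is locally constant in $t$), whereas $[[\C,\K]]_{\N}$ consists of arbitrary sequences of ranks up to eventual equality and homotopy, so the restriction map is far from onto. What Thomsen actually provides (his Lemma 5.6, the tool the paper uses) is an exact sequence of pointed sets
\[
[[A, SB \otimes \K]]_{\N} \xrightarrow{\ \alpha\ } [[A, B \otimes \K]] \xrightarrow{\ \beta\ } [[A, B \otimes\K]]_{\N} \xrightarrow{\ 1-\sigma\ } [[A, B \otimes\K]]_{\N},
\]
with $\sigma$ the shift; the discrete and continuous theories differ precisely by the shift-invariance condition and a $\varprojlim^1$-type ambiguity. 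Since both directions of your equivalence are routed through this nonexistent isomorphism, both have a genuine gap; the worry you flag (additivity of the comparison maps) is not the real issue.

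The correct assembly is close in spirit but needs two different devices. For (QH) $\Rightarrow$ homotopy symmetric, use Proposition~\ref{prop:dam=group} (with $A$ nuclear, so cpc and general classes agree) to see that $[[A,B\otimes\K]]_{\N}$ and $[[A,SB\otimes\K]]_{\N}$ are groups, and then feed Thomsen's exact sequence above into Lemma~\ref{lemma:group structure}: the sequence $[[A,SB\otimes\K]]_{\N}\to [[A,B\otimes\K]]\to\ker(1-\sigma)\to 0$ forces the monoid $[[A,B\otimes\K]]$ to be a group; taking $B=A$ gives homotopy symmetry (and taking $B$ arbitrary is what feeds the last assertion). For the converse, one cannot pass from the continuous group $[[A,A\otimes\K]]$ back to the discrete monoid by restriction; instead use the pairing $[[A,A\otimes\K]]_{\N}\times [[A\otimes\K,A\otimes\K]]\to [[A,A\otimes\K]]_{\N}$ of Remark~\ref{rem:category} together with distributivity: writing $y_1=[[\id{A\otimes\K}]]$ and $y_2=-y_1$, the identity $(y_1+y_2)\circ x=y_1\circ x+y_2\circ x$ produces an inverse $y_2\circ x$ for every $x$, so $[[A,A\otimes\K]]_{\N}$ is a group and Proposition~\ref{prop:QH_vs_group}(ii) yields (QH). Your final paragraph (unsuspension via \cite{DadLor:unsusp} and Choi--Effros to identify $E(A,B)$ with $KK(A,B)$) is fine.
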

\begin{proof} Suppose first that $A$ has property \textrm{(QH)}.
  By Lemma 5.6 of \cite{Thomsen:discrete} for any separable $C^*$-algebra $B$ there is an exact sequence of pointed sets
  \[
  \xymatrix@C=0.8cm{
	 [[A, SB \otimes \K]]_{\N} \ar[r]^-{\alpha} & [[A, B \otimes \K]] \ar[r]^-{\beta} & [[A,  B \otimes\K]]_{\N} \ar[r]^{1-\sigma} &  [[A,  B \otimes\K]]_{\N}.
} 	
  \]
  Here $\sigma$ is the shift map $\sigma[[\psi_n]]=[[\psi_{n+1}]]$, $\beta$ is the natural restriction map and $\alpha$ is defined by stringing together the components of a discrete asymptotic morphism
  $\{\varphi_n:A \to C_0(0,1)\otimes B \otimes \K\}_n$ to form a continuous asymptotic morphism
  $\{\Phi_t :A \to B \otimes \K\}_{t\in [0,\infty)}$, where  $\Phi_t(a)=\varphi_n(a)(t-n)$ for $t\in [n,n+1]$.
Recall that if the addition operation is defined via direct sums, then  $[[A, B \otimes \K]]$ is an abelian monoid and all the other entries are abelian groups by Proposition~\ref{prop:dam=group}.
  It follows that $(1-\sigma)$ is a morphism of groups and both $\alpha$ and $\beta$ are monoid homomorphisms. By Lemma~\ref{lemma:group structure}, the exact sequence $[[A, SB \otimes \K]]_{\N} \to [[A, B \otimes \K]] \to \ker(1-\sigma) \to 0$ implies
 that $[[A, B \otimes \K]]$ is a group. In particular, taking $B=A$ we see that $A$ is homotopy symmetric.

 Conversely, suppose that  $A$ is homotopy symmetric. Then $[[A\otimes \K,A\otimes \K]]$ is a group.
 The product $[[A,A\otimes \K]]_{\N}\times [[A\otimes \K,A\otimes \K]]\to [[A,A\otimes \K]]_{\N}$, $(x,y)\mapsto y\circ x$ has the property that $(y_1+y_2)\circ x=y_1\circ x+y_2\circ x$. By applying this property
 with $y_1=[[\id{A\otimes \K}]]$ and $y_2=-y_1$ we obtain that  $[[A,A\otimes \K]]_{\N}$ is a group.
 It follows that  $A$ has property \textrm{(QH)} by Proposition~\ref{prop:QH_vs_group}(ii).

  For the last part of the statement we apply the main result of \cite{DadLor:unsusp}.
\end{proof}
\begin{remark}
Let $M$ be a homotopy associative and homotopy commutative $H$-space
and let $X$ be a topological space with the homotopy type of a CW-complex.
By a slight extension of \cite[Thm.\ X.2.4]{Whitehead:top-book} the homotopy classes of
continuous maps $[X,M]$ form a group if and only if $\pi_0(M) = [pt, M]$ is a group.
The combination of Proposition \ref{prop:QH_vs_group} and Theorem~\ref{thm:QH_implies homotopy symmetric}
 provides a counterpart for discrete asymptotic morphisms of this statement: Let $A$ be a nuclear  quasidiagonal $C^*$-algebra.
  The monoid $[[A,B \otimes \K]]$ is a group for any separable $C^*$-algebra $B$ if and only if $[[A, \K]]= [[A, C(pt) \otimes \K]]$
   is a group, if and only if $[[A, \K]]_\N$ is a group.
\end{remark}

The importance of Theorem ~\ref{thm:QH_implies homotopy symmetric}  comes from the fact that
 property \textrm{(QH)}  is much easier to verify than the property of being homotopy symmetric.
 It allows us to vastly extend the class of known homotopy symmetric C*-algebras.

\begin{theorem} \label{thm:permanence}
The class of homotopy symmetric $C^*$-algebras has the following permanence properties:
\begin{enumerate}[{\rm (a)}]
	\item \label{it:subalgebras} A nuclear $C^*$-subalgebra of a separable $C^*$-algebra with property \textrm{(QH)} is homotopy symmetric.
	\item \label{it:gensubalgebras} Let $(A_n)_n$ be a sequence of separable $C^*$-algebras with property \textrm{(QH)}. Any separable nuclear $C^*$-subalgebra  of $\prod_n A_n/\bigoplus_n A_n$
 is homotopy symmetric.
 	\item \label{it:ind-limit} The class of separable nuclear homotopy symmetric $C^*$-algebras is closed under inductive limits.
	\item \label{it:2of3} If $\ 0 \to I \to A \to B \to 0$ is a split short exact sequence of separable nuclear $C^*$-algebras and two of the entries are homotopy symmetric, then so is the third.
	\item\label{it: tensor-homotopy} The class of homotopy symmetric $C^*$-algebras is closed  under tensor products by separable $C^*$-algebras  and under (asymptotic) homotopy equivalence.
	\item \label{it:crossed-prod} The class of separable nuclear homotopy symmetric $C^*$-algebras  is closed under crossed products by second countable compact groups.
\end{enumerate}
\end{theorem}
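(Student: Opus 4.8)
The plan is to run every part through Theorem~\ref{thm:QH_implies homotopy symmetric}: for a separable nuclear $C^*$-algebra, being homotopy symmetric is equivalent to property \textrm{(QH)}, so for (a), (b), (c), (d) and (f) it suffices to verify \textrm{(QH)}, using the reformulations of Proposition~\ref{prop:QH-equivalence}. I would use repeatedly that for \emph{nuclear} $A$ the cpc-liftability clause in Proposition~\ref{prop:QH-equivalence}(iii)--(iv) is automatic by the Choi--Effros lifting theorem, so in the nuclear case \textrm{(QH)} amounts to the existence of an injective $*$-homomorphism $A\to\prod_n C\K/\bigoplus_n C\K$, equivalently (by (ii)) of an injective null-homotopic cpc discrete asymptotic morphism $A\to\LH$. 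Part (a) is then immediate: \textrm{(QH)} passes to $C^*$-subalgebras by Remark~\ref{rem:qd}(i), and a nuclear algebra with \textrm{(QH)} is homotopy symmetric.

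Part (b) is the heart of the matter. Let $A\subseteq\prod_n A_n/\bigoplus_n A_n$ be separable and nuclear. By Choi--Effros the inclusion lifts to a cpc map $A\to\prod_n A_n$, i.e.\ to cpc maps $\psi_n\colon A\to A_n$; then $(\psi_n)$ is a cpc discrete asymptotic morphism, and it is injective since the inclusion is. For each $n$, Proposition~\ref{prop:QH-equivalence}(iii) applied to $A_n$ gives cpc maps $\eta^{(n)}_k\colon A_n\to C\K$ $(k\in\N)$ forming a cpc discrete asymptotic morphism whose induced $*$-homomorphism into $\prod_k C\K/\bigoplus_k C\K$ is injective, so $\limsup_k\|\eta^{(n)}_k(x)\|=\|x\|$ for $x\in A_n$. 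Choosing a dense sequence $(a_i)$ in $A$ and a fast-growing $k(n)$ such that for $i,j\le n$ both the multiplicativity defect of $\eta^{(n)}$ at $(\psi_n(a_i),\psi_n(a_j))$ and $\|\psi_n(a_i)\|-\|\eta^{(n)}_{k(n)}(\psi_n(a_i))\|$ are $<1/n$, I would set $\theta_n:=\eta^{(n)}_{k(n)}\circ\psi_n\colon A\to C\K$. One checks that $(\theta_n)$ is a cpc discrete asymptotic morphism (its defect is bounded by the defect of $(\psi_n)$, vanishing in $n$, plus that of $\eta^{(n)}$, vanishing in $k$ for fixed $n$) and that $\limsup_n\|\theta_n(a)\|=\|a\|$ (using $\limsup_n\|\psi_n(a)\|=\|a\|$); hence $(\theta_n)$ realises Proposition~\ref{prop:QH-equivalence}(iii) for $A$, so $A$ has \textrm{(QH)}. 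Part (c) then follows from (b): writing the inductive limit as an increasing union $A=\overline{\bigcup_m A_m}$, the assignment $a\mapsto[(a_m)_m]$ (any $a_m\in A_m$ with $\|a_m-a\|\to0$) is a well-defined injective $*$-homomorphism $A\hookrightarrow\prod_m A_m/\bigoplus_m A_m$ and each $A_m$ has \textrm{(QH)}.

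For (d), consider a split extension $0\to I\xrightarrow{\iota}A\xrightarrow{p}B\to0$ with section $s$, all separable and nuclear. If $A$ is homotopy symmetric then \textrm{(QH)} passes from $A$ to the subalgebras $\iota(I)\cong I$ and $s(B)\cong B$, settling the two cases in which $A$ is one of the given homotopy symmetric algebras. In the remaining case, the Puppe sequence of $p$ together with the identification $C_p\cong I$ in $\mathit{Asym}^{\rm cp}$ valid for split extensions (\cite[Prop.~3.2]{DadLor:unsusp}, as used for Proposition~\ref{prop:dam=group}) yields, for every separable $Y$, a split short exact sequence of abelian monoids $0\to[[B,Y\otimes\K]]\xrightarrow{p^*}[[A,Y\otimes\K]]\xrightarrow{\iota^*}[[I,Y\otimes\K]]\to0$ (the connecting map vanishes because, via the section, $(Sp)^*$ is split injective). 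If $I$ and $B$ are homotopy symmetric, then $[[I,Y\otimes\K]]$ and $[[B,Y\otimes\K]]$ are groups by \cite{DadLor:unsusp}, so $\iota^*$ is a surjective monoid homomorphism onto a group whose preimage of the identity, $p^*([[B,Y\otimes\K]])$, is a group, and Lemma~\ref{lemma:group structure} forces $[[A,Y\otimes\K]]$ to be a group; taking $Y=A$ shows $A$ is homotopy symmetric. Part (e): tensoring asymptotic morphisms with $\id{B}$ is a monoid homomorphism $[[A,A\otimes\K]]\to[[A\otimes B,(A\otimes B)\otimes\K]]$ carrying $[[\id{A}]]$ to $[[\id{A\otimes B}]]$, so an inverse of the former gives one of the latter (in the nuclear case one may instead quote Remark~\ref{rem:qd}(ii)); and an isomorphism $A\cong B$ in $\mathit{Asym}$ induces monoid isomorphisms $[[A,Y\otimes\K]]\cong[[B,Y\otimes\K]]$, whence $[[B,B\otimes\K]]\cong[[A,B\otimes\K]]$ is a group by \cite{DadLor:unsusp}.

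Finally, for (f) let $G$ be second countable compact acting on $A$ separable nuclear homotopy symmetric; then $A\rtimes G$ is separable and nuclear ($G$ amenable), so it is enough to verify \textrm{(QH)}. Starting from an injective null-homotopic cpc discrete asymptotic morphism $\{\varphi_n\colon A\to\K(H_0)\}$ provided by \textrm{(QH)} of $A$, I would average over $G$: on $\mathcal H=L^2(G,H_0)$ set $\tilde\varphi_n(a)=\int^{\oplus}_{x\in G}\varphi_n(\alpha_{x^{-1}}(a))\,dx$. Then $\{\tilde\varphi_n\colon A\to L(\mathcal H)\}$ is again injective, null-homotopic, and asymptotically multiplicative (the defect of $(\varphi_n)$ tends to $0$ uniformly on the compact orbit $\{(\alpha_x a,\alpha_x b):x\in G\}$), and it is $G$-equivariant for the left-translation action $\mathrm{Ad}(v)$ on $L(\mathcal H)$. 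Restricting the target to the separable $G$-invariant subalgebra $D$ generated by the images — on which the $G$-action is point-norm continuous, being so on the generators $\tilde\varphi_n(a)$ — and applying the (full $=$ reduced) crossed-product functor, one gets an injective (amenability of $G$, all algebras separable) null-homotopic cpc discrete asymptotic morphism $\{\tilde\varphi_n\rtimes G\colon A\rtimes G\to D\rtimes G\}$; since $D\rtimes G$ is separable it embeds into $\LH$, so this is a morphism as in Proposition~\ref{prop:QH-equivalence}(ii) and $A\rtimes G$ has \textrm{(QH)}. The main obstacles I anticipate are: the simultaneous control of asymptotic multiplicativity and asymptotic injectivity in the diagonal argument of (b) (which also carries (a) and (c)); and, in (f), checking that the crossed-product construction is compatible with the $\prod_n/\bigoplus_n$-quotients and preserves injectivity of the induced $*$-homomorphism — precisely the point where amenability of $G$ and continuity of the induced actions on the relevant separable subalgebras are needed.
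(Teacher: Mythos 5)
Your overall plan---reducing everything to property \textrm{(QH)} via Theorem~\ref{thm:QH_implies homotopy symmetric} and Proposition~\ref{prop:QH-equivalence}---is indeed the paper's strategy for (a), (b), (c), but two of your steps have real problems. The most serious is (d). For the implication ($I$ and $B$ homotopy symmetric $\Rightarrow$ $A$ homotopy symmetric) you invoke, for every separable $Y$, a short exact sequence of monoids $0\to[[B,Y\otimes\K]]\xrightarrow{p^*}[[A,Y\otimes\K]]\xrightarrow{\iota^*}[[I,Y\otimes\K]]\to0$, i.e.\ a Puppe-type sequence in the \emph{first} variable of the unsuspended theory. Neither the surjectivity of $\iota^*$ (why should every class on the ideal $I$ extend, up to homotopy, to $A$?) nor exactness at $[[A,Y\otimes\K]]$ (why should a class vanishing on $I$ factor through $B$ without suspending?) is justified: the Puppe sequence \eqref{eqn:Puppe} and \cite[Prop.~3.2]{DadLor:unsusp} concern the \emph{second} variable, and no unsuspended first-variable analogue is established in the paper or in the sources it uses. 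The paper instead uses the second-variable split sequence $0\to[[A,I\otimes\K]]\to[[A,A\otimes\K]]\to[[A,B\otimes\K]]\to0$, makes $[[A,B\otimes\K]]$ and $[[A,I\otimes\K]]$ groups via the composition product with an inverse of $[[\id{B\otimes\K}]]$ (resp.\ of $[[\id{I\otimes\K}]]$), and concludes with Lemma~\ref{lemma:group structure}. As written, your (d) rests on an unproved excision-type statement.

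Second, in (b) the choice of a single index $k(n)$ is not legitimate: injectivity of $(\eta^{(n)}_k)_k$ only gives $\limsup_k\|\eta^{(n)}_k(x)\|=\|x\|$ for each fixed $x$, and the $\limsup$ can be attained along different subsequences for different elements (think of maps that alternately annihilate one of two direct summands), so there need not be one $k$ that nearly preserves the norms of $\psi_n(a_1),\dots,\psi_n(a_n)$ simultaneously. This is precisely why the paper passes to finite direct sums, using that property \textrm{(QH)} is closed under finite direct sums (and the analogous direct-sum maneuver inside the proof of Proposition~\ref{prop:QH-equivalence}); with that repair your argument for (b), and hence (a) and (c), matches the paper's (for (c) note also that for non-injective connecting maps one should embed $\varinjlim A_m$ into $\prod_m A_m/\bigoplus_m A_m$ using the connecting maps, not write it as an increasing union). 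Finally, for (f) your averaging construction is a genuinely different and much heavier route than the paper's argument $A\rtimes G\subset(A\otimes C(G))\rtimes G\cong A\otimes\K(L^2(G))$ (Green's imprimitivity), followed by Remark~\ref{rem:qd}(ii) and part (a); as written you would still owe equivariance of the null-homotopies, point-norm continuity of the induced actions on the separable subalgebras you restrict to, and injectivity of the induced discrete asymptotic morphism at the crossed-product level, all of which the paper's embedding avoids.
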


\begin{proof} \label{pf:permanence}
Since property \textrm{(QH)} passes obviously to $C^*$-subalgebras, statement~(\ref{it:subalgebras}) follows from Theorem ~\ref{thm:QH_implies homotopy symmetric}.

Let $A$ be a $C^*$-subalgebra of $\prod_n A_n/\bigoplus_n A_n$ as in statement~(\ref{it:gensubalgebras}).
Using the Choi-Effros lifting theorem, we find a cpc discrete asymptotic morphism $\{\theta_n:A\to A_n\}_n$
such that $\limsup_n \|\theta_n(a)\|=\|a\|$ for all $a\in A$. Since $A$ is separable, by replacing $\theta_n$ by finite direct sums of the form $\theta_n\oplus \theta_{n+1}\oplus\cdots \oplus \theta_{N}$ and $A_n$ by
$A_n\oplus A_{n+1}\oplus\cdots \oplus A_{N}$ we may assume that
$\lim_n \|\theta_n(a)\|=\|a\|$ for all $a\in A.$ Here we use the observation that the class of $C^*$-algebras with property \textrm{(QH)} is closed under finite direct sums.
Let $(\Phi_n^{A_i})_n$ be the homotopy of discrete asymptotic morphisms given by Proposition~\ref{prop:QH-equivalence}(i) for $A_i$.
Since $A$ is separable, one can find an increasing sequence $m(n)$ of natural numbers such that $\Phi_n:=\Phi_{m(n)}^{A_n}\circ \theta_n$ satisfies the condition (i) of Proposition~\ref{prop:QH-equivalence} for $A$
in the sense that $(\Phi_n)$ is a homotopy between an injective cpc discrete asymptotic morphism and
the null map. We conclude the proof of (\ref{it:gensubalgebras}) by applying Theorem ~\ref{thm:QH_implies homotopy symmetric}.

Statement (\ref{it:ind-limit}) follows from (\ref{it:gensubalgebras}) since any inductive limit $\varinjlim A_n$ embeds as a $C^*$-subalgebra of $\prod_n A_n/\bigoplus_n A_n$.

For the proof of (\ref{it:2of3}) first note that the cases of nuclear subalgebras and quotients follow from (\ref{it:subalgebras}), since we assumed that the sequence splits. It remains to be proven that $A$ is homotopy symmetric if $I$ and $B$ are. If $B$ is homotopy symmetric, then $[[B \otimes \K, B \otimes \K]]$ is a group. There is an element $y \in [[B \otimes \K, B \otimes \K]]$ such that $[[\id{B\otimes \K}]] + y = 0$. Therefore $[[A, B \otimes \K]]$ is a group as well, since $y \circ x$ is an additive inverse of $x \in [[A, B \otimes \K]]$.
 Likewise $[[A, I \otimes \K]]$ is a group if $I$ is homotopy symmetric. By \cite[Prop.3.2]{DadLor:unsusp}  we have a short exact sequence of monoids
\[
	0 \to [[A, I \otimes \K]] \to [[A, A \otimes \K]] \to [[A, B \otimes \K]] \to 0
\]
and Lemma~\ref{lemma:group structure} implies that $[[A, A \otimes \K]]$ is a group. Thus, $A$ is homotopy symmetric.

The statement  (\ref{it: tensor-homotopy}) is an immediate consequence of the definition as noted in \cite{DadLor:unsusp}.

For the proof of (\ref{it:crossed-prod}) let $A$ be a separable $C^*$-algebra and let $G$ be a second countable compact group that acts on $A$ by automorphisms.
Then $$A \rtimes G  \subset \left(A\otimes C(G)\right) \rtimes G \cong A \otimes \K(L^2(G))$$ by \cite[Cor.~2.9]{Green}. We conclude the proof by applying~(\ref{it:subalgebras}).
\end{proof}

The following corollary exhibits a new large class of homotopy symmetric $C^*$-algebras.
\begin{corollary}\label{cor:fields}
 Let $A$ be a separable continuous field of nuclear $C^*$-algebras over a compact connected metrizable space $X$.
If one of the fibers of $A$ is homotopy symmetric, then $A$ is homotopy symmetric.
\end{corollary}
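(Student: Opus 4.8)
The plan is to reduce, using Theorem~\ref{thm:QH_implies homotopy symmetric}, to showing that $A$ has property~\textrm{(QH)}: a separable continuous field of nuclear $C^*$-algebras over a compact metrizable space is itself separable and nuclear. Fix $x_0 \in X$ with $A(x_0)$ homotopy symmetric; as $A(x_0)$ is separable and nuclear, Theorem~\ref{thm:QH_implies homotopy symmetric} gives that $A(x_0)$ has property~\textrm{(QH)}. If $X=\{x_0\}$ there is nothing to prove, so we may assume $X$ has at least two points, and then $x_0$ is not isolated since $X$ is connected. Let $q\colon A\to A(x_0)$ be evaluation at $x_0$ and put $I=\ker q$: this is a separable nuclear closed ideal of $A$ which is itself a continuous field of nuclear $C^*$-algebras over the locally compact metrizable space $Y:=X\setminus\{x_0\}$, and $\|b(y)\|\to 0$ as $y\to x_0$ for every $b\in I$.

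The central step will be to prove that $I$ has property~\textrm{(QH)}. Here one exploits that $C_0(Y)=C_0(X\setminus x_0)$ has property~\textrm{(QH)} --- this is the Example after Definition~\ref{def:strong_AHZ}, valid because $X$ is compact, connected and metrizable --- so there is a null-homotopic injective cpc discrete asymptotic morphism $\{\zeta_n\colon C_0(Y)\to\K\}_n$ with a homotopy $\{Z_n\colon C_0(Y)\to C[0,1]\otimes\K\}_n$, $Z_n^{(0)}=0$, $Z_n^{(1)}=\zeta_n$. The idea is to carry this deformation of the scalar algebra over to the field: $I$ is a nondegenerate $C_0(Y)$-algebra with fibers $I(y)=A(y)$; choosing faithful representations of these fibers on one fixed separable Hilbert space and combining them with $\zeta_n$ through the $C_0(Y)$-module structure, one aims to produce a null-homotopic cpc discrete asymptotic morphism $\{\gamma_n\colon I\to\LH\}_n$ that is injective because evaluation at the (dense) points of $Y$ detects the norm on $I$ --- here continuity of $y\mapsto\|b(y)\|$, a defining property of a continuous field, is used. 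Proposition~\ref{prop:QH-equivalence} then yields property~\textrm{(QH)} for $I$, hence $I$ is homotopy symmetric by Theorem~\ref{thm:QH_implies homotopy symmetric}.

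Granting this, one finishes via the extension $0\to I\to A\xrightarrow{q}A(x_0)\to 0$, which is semisplit by the Choi--Effros lifting theorem since $A(x_0)$ is nuclear. Both $I$ and $A(x_0)$ being homotopy symmetric, the groups $[[A,I\otimes\K]]$ and $[[A,A(x_0)\otimes\K]]$ are recovered as in the proof of Theorem~\ref{thm:permanence}(\ref{it:2of3}) (using the composition pairing to invert the identity class). Running the Puppe-sequence argument from the proof of Proposition~\ref{prop:dam=group} for this extension produces a short exact sequence of abelian monoids
\[
0\longrightarrow [[A,I\otimes\K]]\longrightarrow [[A,A\otimes\K]]\longrightarrow [[A,A(x_0)\otimes\K]]\longrightarrow 0,
\]
so $[[A,A\otimes\K]]$ is a group by Lemma~\ref{lemma:group structure}; that is, $A$ is homotopy symmetric, equivalently (Theorem~\ref{thm:QH_implies homotopy symmetric}) it has property~\textrm{(QH)}.

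The main obstacle is the middle step, transferring the deformation of $C_0(X\setminus x_0)$ to the field $I$. Naively ``applying $\zeta_n$ fiberwise'' is inadequate: for fields with nontrivial $K$-homology the witness $\zeta_n$ is not close to a genuine representation, and one cannot simultaneously keep the lifted maps asymptotically multiplicative, injective, and null-homotopic without using the explicit geometric construction of the deformation of $C_0(X\setminus x_0)$ from \cite{DadLor:unsusp, Dad:unsusp-annalen} and coupling it to representations of the fibers with care. A secondary technical point is to justify the Puppe/exact-monoid-sequence argument for an extension that is only semisplit rather than split.
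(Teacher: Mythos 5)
Your reduction to nuclearity and property \textrm{(QH)} is fine, but the two places you flag as ``obstacles'' are genuine gaps, and they are exactly where the proof has to do its work. First, the central step is never carried out: you only say one ``aims to produce'' a null-homotopic injective cpc discrete asymptotic morphism on $I=\ker(ev_{x_0})$ by coupling the deformation $\{\zeta_n\}$ of $C_0(X\setminus x_0)$ to fiberwise representations via the $C_0(X\setminus x_0)$-module structure, and you yourself observe that this naive coupling need not be asymptotically multiplicative; no construction replaces it. Second, your finishing step needs a short exact sequence of monoids coming from $0\to I\to A\to A(x_0)\to 0$, but this extension is in general \emph{not} split by a $*$-homomorphism, only cpc-split. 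The arguments you want to quote (the Puppe-sequence argument of Proposition~\ref{prop:dam=group} and Theorem~\ref{thm:permanence}(\ref{it:2of3})) use the genuine splitting twice: to get surjectivity of $[[A,A\otimes\K]]\to[[A,A(x_0)\otimes\K]]$ (composing an asymptotic morphism into the quotient with a merely cpc lift destroys asymptotic multiplicativity), and to invoke \cite[Prop.~3.2]{DadLor:unsusp}, which identifies the mapping cone with the ideal in $\mathit{Asym}$ only for split extensions. You acknowledge this as a ``secondary technical point'' but do not resolve it.

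The missing idea, which repairs both problems at once, is Blanchard's subtriviality theorem \cite{Blanchard:subtriviality}: $A$ embeds $C(X)$-linearly into $C(X)\otimes\Cuntz{2}$, hence into $E=\{f\in C(X)\otimes\Cuntz{2}\,:\, f(x_0)\in D\}$, where $D\subset\Cuntz{2}$ is isomorphic to $A(x_0)$. The extension $0\to C_0(X\setminus\{x_0\})\otimes\Cuntz{2}\to E\to D\to 0$ \emph{is} split by a $*$-homomorphism (constant functions), its ideal is homotopy symmetric by the Example following Definition~\ref{def:strong_AHZ} together with Theorem~\ref{thm:permanence}(\ref{it: tensor-homotopy}), so Theorem~\ref{thm:permanence}(\ref{it:2of3}) gives that $E$ is homotopy symmetric; being nuclear, $E$ has property \textrm{(QH)}, and the subalgebra permanence Theorem~\ref{thm:permanence}(\ref{it:subalgebras}) then yields the conclusion for $A$. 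Incidentally, the same embedding shows that your ideal $I$ sits inside $C_0(X\setminus\{x_0\})\otimes\Cuntz{2}$ and therefore does have property \textrm{(QH)}, which fills your first gap; but even granting that, your final step would still founder on the non-split extension of $A$ itself, which is precisely why the argument should be run on $E$ rather than on $0\to I\to A\to A(x_0)\to 0$.
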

\begin{proof}
 $A$ has nuclear fibers and hence it is a nuclear $C^*$-algebra.
$A$ embeds in $C(X)\otimes \mathcal{O}_2$ by \cite{Blanchard:subtriviality}.
Fix $x_0\in X$ such that $A(x_0)$ is homotopy symmetric. Furthermore, since the embedding is $C(X)$-linear, it follows that $A$ embeds in $E=\{f\in C(X)\otimes \mathcal{O}_2: f(x_0)\in D\}$, where $D \subset \Cuntz{2}$ is a $C^*$-subalgebra isomorphic to $A(x_0)$. Thus, $E$ fits into a short exact sequence
\[
\xymatrix{
	0 \ar[r] & C_0(X \setminus \{x_0\}) \otimes \Cuntz{2} \ar[r] & E \ar[r]^-{ev_{x_0}} & D \ar[r] & 0
}
\]
This sequence splits via the $*$-homomorphism $D \to E$ that maps $d$ to the constant function $f(x)=d$ on $X$. Since both $C_0(X \setminus\{x_0\}) \otimes \Cuntz{2}$ and $D$ are homotopy symmetric, the statement  now follows from Theorem~\ref{thm:permanence} (\ref{it:2of3}).
\end{proof}

\begin{remark} \label{rem:unsusp-annalen}
Statement (\ref{it:ind-limit}) in Theorem~\ref{thm:permanence} strengthens the main result of \cite{Dad:unsusp-annalen} in the case of nuclear $C^*$-algebras.
\end{remark}

\section{Group $C^*$-algebras}
In this section we prove that any countable torsion free nilpotent group has property \textrm{(QH)}\!.\begin{theorem} \label{central-extensions}
Let $1\to N\to G \to H \to 1$ be a central extension of discrete countable amenable groups where $N$ is torsion free. If $H$ has
property \textrm{(QH)} then so does $G$.
\end{theorem}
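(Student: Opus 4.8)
The plan is to present $I(G)$ as a continuous field of nuclear $C^*$-algebras over the compact connected metrizable space $\widehat N$ whose fibre over the trivial character $1_N\in\widehat N$ is $I(H)$, and then to quote Corollary~\ref{cor:fields}. The reason to work with $I(G)$ rather than with $C^*(G)$ is that $C^*(G)$ is unital, hence never homotopy symmetric, so it has no homotopy symmetric fibre to feed into Corollary~\ref{cor:fields}, whereas the fibre $I(H)$ of $I(G)$ at $1_N$ \emph{is} homotopy symmetric by hypothesis.

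Since $N$ is central in $G$ and abelian, $C^*(N)=C^*_r(N)=C(\widehat N)$ lies in the centre of $C^*(G)$, and being unital it makes $C^*(G)$ a unital $C(\widehat N)$-algebra; the ideal $I(G)$ then inherits the structure of a $C(\widehat N)$-algebra. The character $\iota_G$ factors as $\iota_G=\iota_H\circ\pi$, where $\pi\colon C^*(G)\to C^*(G/N)=C^*(H)$ is the canonical quotient, and $\pi$ is precisely the evaluation $\mathrm{ev}_{1_N}$ of the $C(\widehat N)$-algebra $C^*(G)$; hence $I(G)=\pi^{-1}(I(H))$. If $(e_\lambda)$ is a bounded approximate unit of $I(N)=\ker\bigl(\mathrm{ev}_{1_N}\colon C^*(N)\to\C\bigr)$, then for $b\in C^*(G)$ we have $e_\lambda b\in I(G)$ (because $\iota_G$ is multiplicative and $\iota_G(e_\lambda)=0$), so $fb=\lim_\lambda f(e_\lambda b)$ for $f\in I(N)$, and therefore $\overline{I(N)\cdot I(G)}=\overline{I(N)\cdot C^*(G)}=\ker\pi$. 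It follows that the fibre of $I(G)$ at $1_N$ is $I(G)/\ker\pi\cong I(H)$. Away from $1_N$ the two ideals $I(N)\cdot I(G)$ and $I(N)\cdot C^*(G)$ coincide, so $I(G)$ and $C^*(G)$ restrict to the same field over $\widehat N\setminus\{1_N\}$; in particular the remaining fibres of $I(G)$ are the twisted group $C^*$-algebras $C^*(H,\omega_\chi)$ attached to the $\mathbb T$-valued $2$-cocycles $\omega_\chi$ on $H$ obtained by pushing the extension class forward along $\chi$, and these are nuclear since $H$ is amenable.

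It remains to see that $I(G)$ is a \emph{continuous} field, i.e.\ that $\chi\mapsto\|a(\chi)\|$ is continuous and not merely upper semicontinuous. Since the restriction of $I(G)$ to $\widehat N\setminus\{1_N\}$ agrees with that of $C^*(G)$, and $C^*(G)$ is a continuous $C(\widehat N)$-algebra because $G$ is amenable (for the $\chi$-twisted regular representation $\lambda_\chi$ of $G$ on $\ell^2(G/N)$ one has $\|a(\chi)\|=\|\lambda_\chi(a)\|$, and $\chi\mapsto\lambda_\chi(a)$ is norm-continuous for $a$ supported on a finite subset of $G$), continuity of $I(G)$ away from $1_N$ is clear, and continuity at $1_N$ follows by combining the lower semicontinuity of $\chi\mapsto\|a(\chi)\|_{C^*(G)(\chi)}$ at $1_N$ with the automatic upper semicontinuity of $\chi\mapsto\|a(\chi)\|_{I(G)(\chi)}$ for $a\in I(G)$. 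Now $\widehat N$ is compact ($N$ discrete), connected ($N$ torsion free) and metrizable ($N$ countable); $I(G)$ is separable ($G$ countable) with nuclear fibres ($G$, hence $C^*(G)$ and all of its subquotients, is nuclear); and its fibre over $1_N$ is $I(H)$, which is homotopy symmetric by the hypothesis that $H$ has property \textrm{(QH)} together with Theorem~\ref{thm:QH_implies homotopy symmetric}. Corollary~\ref{cor:fields} then shows that $I(G)$ is homotopy symmetric, hence, being nuclear, that it has property \textrm{(QH)} by Theorem~\ref{thm:QH_implies homotopy symmetric}; thus $G$ has property \textrm{(QH)}.

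The main point requiring care is the identification of the basepoint fibre: one must work with $I(G)$ and verify that its fibre over $1_N$ is $I(H)$ and not $C^*(H)$, which is exactly what makes the homotopy symmetric fibre hypothesis of Corollary~\ref{cor:fields} available. The second delicate ingredient is the continuity of the field over $\widehat N$, where amenability of $G$ is used essentially: it is the input that upgrades the automatic $C(\widehat N)$-algebra structure to a genuine continuous field. Everything else is formal, given Corollary~\ref{cor:fields} and the permanence properties already established.
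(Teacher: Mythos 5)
Your overall route is the same as the paper's: realize $I(G)$ as a separable continuous field of nuclear $C^*$-algebras over the compact connected metrizable space $\widehat N$ with fibre $I(H)$ over the trivial character, and then quote Corollary~\ref{cor:fields} together with Theorem~\ref{thm:QH_implies homotopy symmetric}. Your identification of the basepoint fibre of $I(G)$ as $I(H)$ (via $\overline{I(N)\cdot I(G)}=\overline{I(N)\cdot C^*(G)}=\ker\pi$ and the isometric agreement of the fibres of $I(G)$ with those of $C^*(G)$ away from $1_N$) is correct and in fact spells out a point the paper passes over with ``it follows that''.

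The genuine gap is in your justification of \emph{continuity} of the field, which is exactly the point where the paper instead cites \cite[Thm.~1.2]{Packer-Raeburn:twisted_groups} and \cite[Lemma~6.3]{Echterhoff_Williams:Crossed_products}. Your parenthetical claim that ``$\chi\mapsto\lambda_\chi(a)$ is norm-continuous for $a$ supported on a finite subset of $G$'' is false: already for $G=\mathbb{H}_3$, $N$ its center, the induced/twisted regular representations on $\ell^2(G/N)\cong\ell^2(\Z^2)$ give the rotation algebras, and for a single group element $u$ one has $\|\lambda_\theta(u)-\lambda_{\theta'}(u)\|=2$ whenever $\theta\neq\theta'$, because the unitaries differ by a diagonal of characters evaluated along an infinite subset of $N$, and pointwise convergence of characters is not uniform. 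So norm-continuity of the representing operators cannot be the source of continuity of $\chi\mapsto\|a(\chi)\|$. The statement you want is nevertheless true, but it needs either the citation the paper uses, or a correct argument along the following lines: for finitely supported $a$ and finitely supported unit vectors $\xi$ the functions $\chi\mapsto\|\lambda_\chi(a)\xi\|$ are continuous (only finitely many values of $\chi$ on $N$ enter), so $\chi\mapsto\|\lambda_\chi(a)\|$ is a supremum of continuous functions, hence lower semicontinuous; upper semicontinuity of the fibre norms $\chi\mapsto\|a(\chi)\|$ is automatic from the $C(\widehat N)$-algebra structure; and amenability of $H$ identifies the full twisted fibre norm with the reduced norm $\|\lambda_\chi(a)\|$, so the two semicontinuities combine to give continuity. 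With that repair (or simply with the citation), your proof is complete and coincides in substance with the paper's.
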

\begin{proof} By
 \cite[Thm. 1.2]{Packer-Raeburn:twisted_groups}, (see also \cite[Lemma 6.3]{Echterhoff_Williams:Crossed_products}),
  $C^*(G)$ is a nuclear continuous field
of $C^*$-algebras over the spectrum $\widehat{N}$ of $C^*(N)$. Moreover,
the fiber over the trivial character $\iota$ of $N$ is isomorphic to $C^*(H)$. It follows that $I(G)$ is a nuclear continuous field
of $C^*$-algebras over the spectrum $\widehat{N}$ whose fiber at $\iota$ is isomorphic to $I(H)$. Since $N$ is torsion free, its Pontriagin dual
is connected. We conclude the proof by applying Cor.~\ref{cor:fields}.
\end{proof}
\begin{lemma}\label{lemma:union-AH}
Suppose that a countable  discrete amenable group $G$ is the union of an increasing sequence of subgroups $(G_i)_i$ each of which
has property \textrm{(QH)}\!. Then $G$ has property \textrm{(QH)}\!.
\end{lemma}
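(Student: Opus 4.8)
The plan is to realize $I(G)$ as an inductive limit of the augmentation ideals $I(G_i)$ and then to invoke the permanence properties of Theorem~\ref{thm:permanence}, together with the equivalence of property \textrm{(QH)} and homotopy symmetry for nuclear $C^*$-algebras.

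First I would record the routine structural facts. Since $G$ is countable and amenable, $C^*(G) = C^*_r(G)$ is separable and nuclear, and hence so is the ideal $I(G)$; the same applies to each $G_i$ and to $I(G_i)$, as subgroups of amenable groups are amenable. Next I would identify the inductive limit. For each $i$ the inclusion $G_i \hookrightarrow G$ induces an injective $*$-homomorphism $C^*(G_i) \hookrightarrow C^*(G)$ — this is where amenability is essential, since it makes the canonical map from the full $C^*$-algebra of the subgroup to that of $G$ agree with the isometric embedding $C^*_r(G_i) \hookrightarrow C^*_r(G)$ of reduced algebras. Because the trivial character $\iota$ of $C^*(G)$ restricts to the trivial character of $C^*(G_i)$, this embedding carries $I(G_i)$ onto a $C^*$-subalgebra of $I(G)$, and for $i \le j$ these inclusions are compatible, yielding an increasing chain $I(G_1) \subseteq I(G_2) \subseteq \cdots \subseteq I(G)$. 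Since $I(G)$ is the closed linear span of $\{\, g - 1 : g \in G \,\}$ and every $g \in G$ already lies in some $G_i$ with $g - 1 \in I(G_i)$, the union $\bigcup_i I(G_i)$ is dense in $I(G)$; therefore $I(G) = \overline{\bigcup_i I(G_i)} = \varinjlim_i I(G_i)$.

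With this in hand the conclusion is immediate. Each $G_i$ has property \textrm{(QH)}, so $I(G_i)$ has property \textrm{(QH)}, and being separable and nuclear it is homotopy symmetric by Theorem~\ref{thm:QH_implies homotopy symmetric}. Theorem~\ref{thm:permanence}(\ref{it:ind-limit}) then shows that the inductive limit $I(G) = \varinjlim_i I(G_i)$ of separable nuclear homotopy symmetric $C^*$-algebras is again homotopy symmetric, and one final application of Theorem~\ref{thm:QH_implies homotopy symmetric} to the separable nuclear algebra $I(G)$ gives that $I(G)$, and hence $G$, has property \textrm{(QH)}.

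The only point that requires any care — and it is a mild one — is the identification $I(G) = \varinjlim_i I(G_i)$, i.e.\ the injectivity of the maps $C^*(G_i) \to C^*(G)$, for which the amenability hypothesis is exactly what is used. If one prefers to avoid the detour through homotopy symmetry, an alternative is to lift the canonical $*$-homomorphism $I(G) \to \prod_i I(G_i)/\bigoplus_i I(G_i)$ to a cpc map and then run the diagonalization argument from the proof of Theorem~\ref{thm:permanence}(\ref{it:gensubalgebras}) verbatim, producing directly a null-homotopic injective cpc discrete asymptotic morphism $I(G) \to \K$, which is condition (i) of Proposition~\ref{prop:QH-equivalence} and needs no appeal to nuclearity.
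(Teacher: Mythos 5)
Your proposal is correct and follows essentially the same route as the paper: use amenability to get nuclearity and to regard $C^*(G_i)\subset C^*(G)$, observe that $\bigcup_i I(G_i)$ is dense in $I(G)$ so $I(G)=\varinjlim_i I(G_i)$, and conclude via Theorem~\ref{thm:QH_implies homotopy symmetric} together with the inductive-limit permanence in Theorem~\ref{thm:permanence}(\ref{it:ind-limit}). The extra details you supply (injectivity of the inclusions, the spanning set $\{g-1\}$, and the alternative direct argument via Proposition~\ref{prop:QH-equivalence}) are consistent with, but not needed beyond, the paper's argument.
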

\begin{proof} Since $G$ is amenable, so is each $G_i$ and the associated group $C^*$-algebras  are nuclear.
We may regard  $C^*(G_i)$ as a $C^*$-subalgebra of $C^*(G)$.
Then the union of $C^*(G_i)$ is dense in $C^*(G)$ and hence the union of $I(G_i)$ is dense in $I(G)$.
The conclusion follows from  Thm.~\ref{thm:permanence}~(\ref{it:ind-limit}).
\end{proof}
\begin{theorem}\label{thm:AH2}
If $G$ is a countable torsion free nilpotent group, then $I(G)$ is a homotopy symmetric $C^*$-algebra.
\end{theorem}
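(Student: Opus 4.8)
The plan is to deduce the theorem from Theorem~\ref{thm:QH_implies homotopy symmetric}: since a nilpotent group is amenable, $C^*(G)$ is nuclear, hence $I(G)$ is a separable nuclear $C^*$-algebra, and for such algebras being homotopy symmetric is the same as having property \textrm{(QH)} --- which, by Definition~\ref{def:strong_AHZ}(ii), is precisely the assertion that $G$ has property \textrm{(QH)}. So everything reduces to proving that every countable torsion-free nilpotent group has property \textrm{(QH)}. First I would dispose of the non--finitely-generated case: writing $G$ as the increasing union of the subgroups $G_i = \langle g_1, \dots, g_i \rangle$, where $(g_i)_i$ enumerates $G$, each $G_i$ is a finitely generated torsion-free nilpotent group and hence amenable, so Lemma~\ref{lemma:union-AH} reduces the claim for $G$ to the claim for each $G_i$. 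Thus it suffices to treat finitely generated torsion-free nilpotent groups.

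For these I would argue by induction on the Hirsch length $h(G)$, the (well-defined) number of infinite cyclic factors in a polycyclic series of $G$. If $h(G) = 0$ then $G$ is finite and torsion free, so $G = \{1\}$ and $I(G) = 0$ trivially has property \textrm{(QH)}; alternatively one may start the induction at abelian $G$, where $I(G) \cong C_0(\widehat{G} \setminus \iota)$ with $\widehat{G}$ a torus, covered by the Example after Definition~\ref{def:strong_AHZ}. For the inductive step let $G$ be a nontrivial finitely generated torsion-free nilpotent group. Then its center $N := Z(G)$ is a nontrivial finitely generated free abelian group, say $N \cong \Z^k$ with $k \geq 1$. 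A standard structural fact about torsion-free nilpotent groups is that the centralizer of any subset is isolated (root-closed); applied to $Z(G) = C_G(G)$ this shows that $N$ is isolated in $G$, so $H := G/N$ is again torsion free, and it is finitely generated nilpotent with $h(H) = h(G) - k < h(G)$. By the inductive hypothesis $H$ has property \textrm{(QH)}. Now $1 \to N \to G \to H \to 1$ is a central extension of countable amenable groups in which $N$ is torsion free, so Theorem~\ref{central-extensions} yields that $G$ has property \textrm{(QH)}, completing the induction and the proof.

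The only input beyond the results already established in the paper is elementary structure theory of finitely generated torsion-free nilpotent groups: that they are polycyclic with a well-defined Hirsch length, that the center is a nontrivial finitely generated free abelian group, and that it is isolated, so that passing to $G/Z(G)$ keeps us inside the same class with strictly smaller Hirsch length. In this sense the final theorem is a routine assembly, and the main (minor) point to get right is exactly this torsion-freeness of $G/Z(G)$; the substantive content --- that property \textrm{(QH)} is preserved under the central extensions by torsion-free abelian groups that build up a nilpotent group --- is carried by Theorem~\ref{central-extensions}, which in turn rests on the continuous-field permanence result Corollary~\ref{cor:fields}. (One could equally well embed $G$ into a group $UT_n(\Z)$ of integral unitriangular matrices and induct on $n$: there the ``last column'' is a central $\Z^{n-1}$ with quotient $UT_{n-1}(\Z)$, still torsion free, and one finishes using that property \textrm{(QH)} passes to $C^*$-subalgebras, since $I(G)$ embeds in $I(UT_n(\Z))$; but the argument via the center needs no such embedding theorem.)
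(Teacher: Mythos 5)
Your proposal is correct and follows essentially the same route as the paper: reduce to the finitely generated case via Lemma~\ref{lemma:union-AH}, then induct using the central extension $1\to Z(G)\to G\to G/Z(G)\to 1$ and Theorem~\ref{central-extensions}. The only (cosmetic) differences are that you induct on the Hirsch length rather than on the length of the upper central series, and you justify torsion-freeness of $G/Z(G)$ by the isolated-centralizer fact where the paper cites Jennings; both are standard and equally valid.
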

\begin{proof}
Subgroups of nilpotent groups are nilpotent.
Consequently, we may assume by Lemma~\ref{lemma:union-AH} that $G$ is finitely generated.
Since $G$ is nilpotent it has a finite upper central series $(Z_i)_{i=0}^n$ consisting of subgroups
\begin{equation}\label{eqn:ups}\{1\}=Z_0\subset Z_1 \subset \cdots \subset Z_{n-1}\subset Z_n=G,\end{equation}
where $Z_1$ is the center of $G$ and for $i\geq 1$, $Z_{i+1}$ is the unique subgroup of $G$ such that $Z_{i+1}/Z_i$ is the center of $G/Z_i$.
We argue by induction on the length $n$ of the central series of $G$.
Suppose that  property \textrm{(QH)} holds for all finitely generated nilpotent groups with upper central series of length $n-1$. If $G$ is a finitely generated, torsion free and satisfies ~\eqref{eqn:ups}, then $G/Z_1$ is finitely generated, torsion free and nilpotent  by \cite[Cor. 1.3]{Jennings} and $(Z_{i+1}/Z_1)_{i=0}^{n-1}$ is a central series of length $n-1$ for $G/Z_1$.
Since the upper central series is the shortest central series \cite[5.1.9]{Robinson},
we conclude the proof by applying Theorem~\ref{central-extensions} to the central extension
$ 1\to Z_1\to G\to G/Z_1\to 1.$
  \qedhere\end{proof}

\begin{remark}\label{remark:final} (i) The assumption that $G$ is torsion free is essential. Indeed if $s\in G$ is an element of order $n>1$, then
$1-\frac{1}{n}(1+s+\cdots +s^{n-1})$ is a nonzero projection contained in $I(G)$.

(ii) If $\pi: \mathbb{H}_3 \to U(n)$ is a representation of the Heisenberg group whose restriction to the center
is non-trivial, then there is no continuous path of representations connecting $\pi$  to a multiple of the trivial representation, \cite{Adem2015}.

(iii) The K-homology of nilpotent groups such as $\mathbb{H}_{2n+3}$, $n\geq 1$, has nontrivial torsion, \cite{Lee-Packer}.
In view of Theorem~\ref{thm:AH2}, there are matricial deformations of $C^*(\mathbb{H}_{2n+3})$ which detect this torsion.
\end{remark}


\end{document}